\providecommand{\bysame}{\leavevmode\hbox to3em{\hrulefill}\thinspace}
\providecommand{\MR}{\relax\ifhmode\unskip\space\fi MR }
\providecommand{\href}[2]{#2}
\newtheorem{theorem}{Theorem}[section]
\newtheorem{lemma}[theorem]{Lemma}
\newtheorem{corollary}[theorem]{Corollary}
\newtheorem{proposition}[theorem]{Proposition}
\theoremstyle{definition}
\newtheorem{remark}[theorem]{Remark}
\numberwithin{equation}{section}
\begin{document}

\title[Weighted composition operators]{Self-adjoint, unitary, and normal weighted composition operators in several variables}
\author{Trieu Le}
\address{Department of Mathematics and Statistics, Mail Stop 942, University of Toledo, Toledo, OH 43606}
\email{trieu.le2@utoledo.edu}

\subjclass[2010]{Primary 47B38; Secondary 47B15, 47B33}

\begin{abstract} We study weighted composition operators on Hilbert spaces of analytic functions on the unit ball with kernels of the form $(1~-~\langle z,w\rangle)^{-\gamma}$ for $\gamma>0$. We find necessary and sufficient conditions for the adjoint of a weighted composition operator to be a weighted composition operator or the inverse of a weighted composition operator. We then obtain characterizations of self-adjoint and unitary weighted composition operators. Normality of these operators is also investigated.
\end{abstract}
\maketitle

\section{Introduction}\label{S:intro}

Let $\mathbb{B}_n$ denote the open unit ball in $\mathbb{C}^n$. For $\mathcal{H}$ a Banach space of analytic functions on $\mathbb{B}_n$ and $\varphi$ an analytic self-map of $\mathbb{B}_n$, the composition operator $C_{\varphi}$ is defined by $C_{\varphi}h=h\circ\varphi$ for $h$ in $\mathcal{H}$ for which the function $h\circ\varphi$ also belongs to $\mathcal{H}$. Researchers have been interested in studying how the function theoretic behavior of $\varphi$ affects the properties of $C_{\varphi}$ on $\mathcal{H}$ and vice versa. When $\mathcal{H}$ is a classical Hardy space or a weighted Bergman space of the unit disk, it follows from Littlewood Subordination Theorem that $C_{\varphi}$ is bounded on $\mathcal{H}$ (see, for example, \cite[Section 3.1]{CowenCRCP1995}). On the other hand, the situation becomes more complicated in higher dimensions. For $n\geq 2$,  there exist unbounded composition operators on the Hardy and Bergman spaces of $\mathbb{B}_n$, even with polynomial mappings. The interested reader is referred to \cite[Chapter 3]{CowenCRCP1995} for these examples and certain necessary and sufficient conditions for the boundedness and compactness of $C_{\varphi}$.

Let $f:\mathbb{B}_n\to\mathbb{C}$ be an analytic function and let $\varphi$ be as above. The weighted composition operator $W_{f,\varphi}$ is defined by $W_{f,\varphi}h=f\cdot(h\circ\varphi)$ for all $h\in\mathcal{H}$ for which the function $f\cdot(h\circ\varphi)$ also belongs to $\mathcal{H}$. Weighted composition operators have arisen in the work of Forelli \cite{ForelliCJM1964} on isometries of classical Hardy spaces $H^{p}$ and in Cowen's work \cite{CowenTAMS1978,CowenJFA1980} on commutants of analytic Toeplitz operators on the Hardy space $H^2$ of the unit disk. Weighted composition operators have also been used in descriptions of adjoints of composition operators (see \cite{CowenJFA2006} and the references therein). Boundedness and compactness of weighted composition operators on various Hilbert spaces of analytic functions have been studied by many mathematicians (see, for example, \cite{ContrerasIEOT2003,CuckovicJLMS2004,MatacheCAOT2008,UekiSzeged2009} and references therein). Recently researchers have started investigating the relations between weighted composition operators and their adjoints. Cowen and Ko \cite{CowenTAMS2010} and Cowen et al. \cite{CowenPreprint2010} characterize self-adjoint weighted composition operators and study their spectral properties on weighted Hardy spaces on the unit disk whose kernel functions are of the form $K_w(z)=(1-\overline{w}z)^{-\kappa}$ for $\kappa\geq 1$. In \cite{BourdonJMAA2010}, Bourdon and Narayan study normal weighted composition operators on the Hardy space $H^2$. They characterize unitary weighted composition operators and apply their characterization to describe all normal operators $W_{f,\varphi}$ in the case $\varphi$ fixes a point in the unit disk.

The purpose of the current paper is to study self-adjoint, unitary and normal weighted composition operators on a class of Hilbert spaces $\mathcal{H}$ of analytic functions on the unit ball. We characterize $W_{f,\varphi}$ whose adjoint is a weighted composition operator or the inverse of a weighted composition operator. As a consequence, we generalize certain results in \cite{BourdonJMAA2010,CowenTAMS2010,CowenPreprint2010} to higher dimensions and also obtain results that have not been previously known in one dimension.

For any real number $\gamma>0$, let $H_{\gamma}$ denote the Hilbert space of analytic functions on $\mathbb{B}_n$ with reproducing kernel functions $$K^{\gamma}_{z}(w)=K^{\gamma}(w,z)=\frac{1}{(1-\langle w,z\rangle)^{\gamma}}\quad\text{ for } z,w\in\mathbb{B}_n.$$

By definition, $H_{\gamma}$ is the completion of the linear span of $\{K^{\gamma}_z: z\in\mathbb{B}_n\}$ with the inner product $\langle K^{\gamma}_z,K^{\gamma}_w\rangle = K^{\gamma}(w,z)$ (this is indeed an inner product due to the positive definiteness of $K^{\gamma}(w,z)$). It is well known that any function $f\in H_{\gamma}$ is analytic on $\mathbb{B}_n$ and for $z\in\mathbb{B}_n$, we have $f(z)=\langle f,K^{\gamma}_z\rangle$. 

For any multi-index $m=(m_1,\ldots,m_n)\in\mathbb{N}_0^n$ (here $\mathbb{N}_0$ denotes the set of non-negative integers) and $z=(z_1,\ldots,z_n)\in\mathbb{B}_n$, we write $z^m = z_1^{m_1}\cdots z_n^{m_n}$. It turns out that $H_{\gamma}$ has an orthonormal basis consisting of constant multiplies of the monomials $z^{m}$, for $m\in\mathbb{N}_0^n$. The spaces $H_{\gamma}$ belong to the class of weighted Hardy spaces introduced by Cowen and MacCluer in \cite[Section 2.1]{CowenCRCP1995}. They are called (generalized) weighted Bergman spaces by Zhao and Zhu in \cite{ZhaoMSMF2008} because of their similarities with other standard weighted Bergman spaces on the unit ball. In fact, for $\gamma>n$, $H_{\gamma}$ is the weighted Bergman space $A^2_{\gamma-n-1}(\mathbb{B}_n)$, which consists of all analytic functions that are square integrable with respect to the weighted Lebesgue measure $(1-|z|^2)^{\gamma-n-1}dV(z)$, where $dV$ is the Lebesgue volume measure on $\mathbb{B}_n$. If $\gamma=n$, $H_{n}$ is the usual Hardy space on $\mathbb{B}_n$. When $n\geq 2$ and $\gamma=1$, $H_{1}$ is the so-called Drury-Arveson space, which has been given a lot of attention lately in the study of multi-variable operator theory and interpolation (see \cite{AglerAMS2002,Arveson1998} and the references therein). For arbitrary $\gamma>0$, $H_{\gamma}$ coincides with the space $A^2_{\gamma-n-1}(\mathbb{B}_n)$ in \cite{ZhaoMSMF2008} (we warn the reader that when $\gamma<n$, the space $A^2_{\gamma-n-1}(\mathbb{B}_n)$ is not defined as the space of analytic functions that are square integrable with respect to $(1-|z|^2)^{\gamma-n-1}dV(z)$, since the latter contains only the zero function).

\section{Bounded weighted composition operators}\label{S:bounded_WCOs}
As we mentioned in the Introduction, the composition operator $C_{\varphi}$ is not always bounded on $H_{\gamma}$ of the unit ball $\mathbb{B}_n$ when $n\geq 2$. On the other hand, if $\varphi$ is a linear fractional self-map of the unit ball, then it was shown by Cowen and MacCluer \cite{CowenSzeged2000} that $C_{\varphi}$ is bounded on the Hardy space and all weighted Bergman spaces of $\mathbb{B}_n$. It turns out, as we will show below, that for such $\varphi$, $C_{\varphi}$ is always bounded on $H_{\gamma}$ for any $\gamma>0$. We will need the following characterization of $H_{\gamma}$, which follows from \cite[Theorem~13]{ZhaoMSMF2008}. 

For any multi-index $m=(m_1,\ldots,m_n)$ of non-negative integers and any analytic function $h$ on $\mathbb{B}_n$, we write $\partial^{m}h = \frac{\partial^{|m|}h}{\partial z_1^{m_1}\cdots\partial z_n^{m_n}}$, where $|m|=m_1+\cdots+m_n$. For any real number $\alpha$, put $d\mu_{\alpha}(z)=(1-|z|^2)^{-n-1+\alpha}dV(z)$, where $dV$ is the usual Lebesgue measure on the unit ball $\mathbb{B}_n$.

\begin{theorem}\label{T:Zhao-Zhu} Let $\gamma>0$. The following conditions are equivalent for an analytic function $h$ on $\mathbb{B}_n$.
\begin{enumerate}[(a)]
\item $h$ belongs to $H_{\gamma}$.
\item For some non-negative integer $k$ with $2k+\gamma>n$, all the functions $\partial^{m}h$, where $|m|=k$, belong to $L^2(\mathbb{B}_n,d\mu_{\gamma+2k})$.
\item For every non-negative integer $k$ with $2k+\gamma>n$, all the functions $\partial^{m}h$, where $|m|=k$, belong to $L^2(\mathbb{B}_n,d\mu_{\gamma+2k})$.
\end{enumerate}
\end{theorem}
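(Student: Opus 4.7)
The plan is to convert (a), (b), and (c) each into a weighted $\ell^2$ condition on the Taylor coefficients of $h(z)=\sum_{m\in\mathbb{N}_0^n}a_m z^m$ and verify that the three resulting conditions agree. From the multinomial expansion
\[
(1-\langle w,z\rangle)^{-\gamma}=\sum_{m}\frac{\Gamma(\gamma+|m|)}{\Gamma(\gamma)\,m!}\,w^{m}\bar{z}^{m},
\]
one reads off that $\{z^{m}\}$ is an orthogonal basis of $H_{\gamma}$ with $\|z^{m}\|_{H_{\gamma}}^{2}=\Gamma(\gamma)\,m!/\Gamma(\gamma+|m|)$, so (a) is equivalent to $\sum_{m}|a_{m}|^{2}\,m!/\Gamma(\gamma+|m|)<\infty$.

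For (b) and (c), I fix $k$ with $\gamma+2k>n$; then $d\mu_{\gamma+2k}$ is a finite radial measure, monomials remain orthogonal in $L^{2}(d\mu_{\gamma+2k})$, and polar coordinates combined with the Beta integral give $\|z^{m}\|_{L^{2}(d\mu_{\gamma+2k})}^{2}=c_{n}\,\Gamma(\gamma+2k-n)\,m!/\Gamma(|m|+\gamma+2k)$ for a dimensional constant $c_n$. Using $\partial^{\ell}z^{m}=(m!/(m-\ell)!)\,z^{m-\ell}$ for $\ell\le m$ (and zero otherwise) and reindexing via $m\mapsto m+\ell$,
\[
\sum_{|\ell|=k}\|\partial^{\ell}h\|_{L^{2}(d\mu_{\gamma+2k})}^{2}=c_{n}\,\Gamma(\gamma+2k-n)\sum_{m}|a_{m}|^{2}\,\frac{m!}{\Gamma(|m|+\gamma+k)}\sum_{\substack{|\ell|=k\\ \ell\le m}}\frac{m!}{(m-\ell)!}.
\]

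The technical heart of the argument, and the place where the main obstacle lies, is the combinatorial equivalence
\[
\sum_{\substack{|\ell|=k\\ \ell\le m}}\frac{m!}{(m-\ell)!}\;\asymp\;|m|^{k}\qquad (|m|\ge k),
\]
with constants depending only on $n$ and $k$. The upper bound follows from $m_{i}!/(m_{i}-\ell_{i})!\le m_{i}^{\ell_{i}}$ together with $\sum_{|\ell|=k}m^{\ell}\le|m|^{k}$; the lower bound I would obtain by restricting the sum to $\ell=k\,e_{i_{0}}$, where $i_{0}$ maximises $m_{i}$, so that $m_{i_{0}}\ge|m|/n$ handles all $m$ with $|m|$ large, the finitely many remaining cases being trivial. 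Combined with Stirling's asymptotic $\Gamma(|m|+\gamma+k)\asymp|m|^{k}\,\Gamma(|m|+\gamma)$, this shows that the weight multiplying $|a_{m}|^{2}$ in the previous display is comparable to $m!/\Gamma(\gamma+|m|)$, uniformly in $m$. Therefore (b) and (c) reduce to the same weighted $\ell^{2}$ condition as (a), independent of the admissible $k$; this is exactly the content of \cite[Theorem~13]{ZhaoMSMF2008}, which may be cited directly in lieu of the outline above.
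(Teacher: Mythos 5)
Your proof is correct, but it follows a genuinely different route from the paper, which offers no argument of its own: it simply quotes \cite[Theorem~13]{ZhaoMSMF2008}, where the result is proved (for general $A^p_\alpha$ spaces) by the machinery of fractional radial differential operators. You instead exploit the Hilbert-space structure: expand $h$ in monomials, read off $\|z^m\|^2_{H_\gamma}=\Gamma(\gamma)m!/\Gamma(\gamma+|m|)$ from the kernel, compute $\|z^m\|^2_{L^2(d\mu_{\gamma+2k})}$ by the Beta integral, and reduce (a), (b), (c) to one and the same weighted $\ell^2$ condition on the Taylor coefficients via the combinatorial estimate $\sum_{|\ell|=k,\,\ell\le m} m!/(m-\ell)!\asymp|m|^k$ and the exact identity $\Gamma(|m|+\gamma+k)=\prod_{j=0}^{k-1}(|m|+\gamma+j)\,\Gamma(|m|+\gamma)\asymp|m|^k\Gamma(|m|+\gamma)$. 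I checked the computation: the reindexed formula for $\sum_{|\ell|=k}\|\partial^\ell h\|^2$ is right, both bounds in the combinatorial equivalence work as you describe, and the fact that the inner sum is empty for $|m|<k$ is harmless since only finitely many coefficients are thereby invisible to (b) and (c). Two small points you use tacitly and should state: that $H_\gamma$ coincides with the space of analytic functions whose coefficients satisfy the weighted $\ell^2$ condition (the standard weighted-Hardy-space identification, consistent with the orthogonal basis of monomials the paper records), and that the coefficient formula for $\|\cdot\|_{L^2(d\mu_{\gamma+2k})}$ rests on the unitary invariance of $\mu_{\gamma+2k}$ plus monotone convergence on balls $r\mathbb{B}_n$. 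What each approach buys: the citation is shorter and covers the full $L^p$ scale treated by Zhao and Zhu, while your argument is elementary, self-contained, and makes transparent exactly why the admissible $k$ is irrelevant, at the cost of being tied to $p=2$.
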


\begin{remark}\label{R:Zhao-Zhu}
Theorem \ref{T:Zhao-Zhu} in particular shows that for any given positive number $s$, the function $h$ belongs to $H_{\gamma}$ if and only if for any multi-index $l$ with $|l|=s$, $\partial^{l}h$ belongs to $H_{\gamma+2s}$. As a consequence, $H_{\gamma_1}\subset H_{\gamma_2}$ whenever $\gamma_1\leq\gamma_2$.
\end{remark}

Recall that the multiplier space ${\rm Mult}(H_{\gamma})$ of $H_{\gamma}$ is the space of all analytic functions $f$ on $\mathbb{B}_n$ for which $fh$ belongs to $H_{\gamma}$ whenever $h$ belongs to $H_{\gamma}$. Since norm convergence in $H_{\gamma}$ implies point-wise convergence on $\mathbb{B}_n$, it follows from the closed graph theorem that $f$ is a multiplier if and only if the multiplication operator $M_f$ is bounded on $H_{\gamma}$. It is well known that ${\rm Mult}(H_{\gamma})$ is contained in $H^{\infty}$, the space of bounded analytic functions on $\mathbb{B}_n$. For $\gamma\geq n$, it holds that ${\rm Mult}(H_{\gamma})=H^{\infty}$. This follows from the fact that for such $\gamma$ the norm on $H_{\gamma}$ comes from an integral. On the other hand, when $n\geq 2$ and $\gamma=1$ (hence $H_{\gamma}$ is the Drury-Arveson space), ${\rm Mult}(H_{\gamma})$ is strictly smaller than $H^{\infty}$ (see \cite[Remark~8.9]{AglerAMS2002} or \cite[Theorem~3.3]{Arveson1998}). However we will show that if $f$ and all of its partial derivatives are bounded on $\mathbb{B}_n$, then $f$ is a multiplier of $H_{\gamma}$ for all $\gamma>0$.

\begin{lemma}\label{L:multiplier}
Let $f$ be a bounded analytic function such that for each multi-index $m$, the function $\partial^{m}f$ is bounded on $\mathbb{B}_n$. Then $f$ belongs to ${\rm Mult}(H_{\gamma})$, and hence the operator $M_{f}$ is bounded on $H_{\gamma}$ for any $\gamma>0$.
\end{lemma}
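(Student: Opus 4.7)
The plan is to apply Theorem \ref{T:Zhao-Zhu}: since norm convergence in $H_{\gamma}$ implies pointwise convergence, the closed graph theorem (as noted just before the lemma) reduces matters to showing that $fh \in H_{\gamma}$ whenever $h\in H_{\gamma}$. I would fix a non-negative integer $k$ with $2k+\gamma>n$ and verify that $\partial^{m}(fh)\in L^{2}(\mathbb{B}_{n},d\mu_{\gamma+2k})$ for every multi-index $m$ with $|m|=k$; by the implication (b)$\Rightarrow$(a) in Theorem \ref{T:Zhao-Zhu}, this will give $fh\in H_{\gamma}$.

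For this, I would apply the Leibniz rule:
\[
\partial^{m}(fh)\;=\;\sum_{l\leq m}\binom{m}{l}\,(\partial^{l}f)\,(\partial^{m-l}h).
\]
Each factor $\partial^{l}f$ is bounded by hypothesis, so it suffices to show that each $\partial^{m-l}h$ lies in $L^{2}(\mathbb{B}_{n},d\mu_{\gamma+2k})$. Since $|m-l|=k-|l|$, Remark \ref{R:Zhao-Zhu} applied to $h\in H_{\gamma}$ gives $\partial^{m-l}h\in H_{\gamma+2(k-|l|)}$. The inclusion $H_{\gamma_{1}}\subset H_{\gamma_{2}}$ for $\gamma_{1}\leq\gamma_{2}$ from the same remark yields $\partial^{m-l}h\in H_{\gamma+2k}$. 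Because $\gamma+2k>n$, the space $H_{\gamma+2k}$ coincides with the weighted Bergman space $A^{2}_{\gamma+2k-n-1}(\mathbb{B}_{n})$, which is by definition contained in $L^{2}(\mathbb{B}_{n},d\mu_{\gamma+2k})$. Combining these observations and taking the finite sum gives $\partial^{m}(fh)\in L^{2}(\mathbb{B}_{n},d\mu_{\gamma+2k})$, as required.

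There is no real obstacle here; the entire argument rests on packaging the Zhao--Zhu characterization correctly. The small but essential trick is to pass all the intermediate derivatives $\partial^{m-l}h$ (which a priori live in different spaces $H_{\gamma+2(k-|l|)}$) into the \emph{same} space $L^{2}(\mathbb{B}_{n},d\mu_{\gamma+2k})$ via the monotonicity of the scale $H_{\gamma}$ together with the assumption $\gamma+2k>n$. Boundedness of $M_{f}$ on $H_{\gamma}$ is then an immediate application of the closed graph theorem.
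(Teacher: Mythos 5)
Your proof is correct and follows essentially the same route as the paper: Leibniz expansion of $\partial^{m}(fh)$, boundedness of $\partial^{l}f$, Remark~\ref{R:Zhao-Zhu} to place $\partial^{m-l}h$ in $H_{\gamma+2k}$, and Theorem~\ref{T:Zhao-Zhu} to conclude. The only cosmetic difference is that you finish by checking the $L^{2}(\mathbb{B}_n,d\mu_{\gamma+2k})$ condition directly via $H_{\gamma+2k}=A^{2}_{\gamma+2k-n-1}$, whereas the paper invokes ${\rm Mult}(H_{\gamma+2k})=H^{\infty}$; both rest on the same fact that the norm of $H_{\gamma+2k}$ is an integral norm once $\gamma+2k>n$.
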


\begin{proof}
Let $\gamma>0$ be given. Choose a positive integer $k$ such that $\gamma+2k>n$. Let $h$ belong to $H_{\gamma}$. For any multi-index $m$ with $|m|=k$, the derivative $\partial^{m}(fh)$ is a linear combination of products of the form $(\partial^{t}f)(\partial^{s}h)$ for multi-indexes $s,t$ with $s+t=m$. For such $s$ and $t$, $\partial^{s}h$ belongs to $H_{\gamma+2|s|}\subset H_{\gamma+2k}$ (by Remark \ref{R:Zhao-Zhu}) and $\partial^{t}f$, which is bounded by the hypothesis, is a multiplier of $H_{\gamma+2k}$ (since ${\rm Mult}(H_{\gamma+2k})=H^{\infty}$). Thus, $(\partial^{t}f)(\partial^{s}h)$ belongs to $H_{\gamma+2k}$. Therefore, $\partial^{m}(fh)$ belongs to $H_{\gamma+2k}$. By Theorem \ref{T:Zhao-Zhu}, $fh$ is in $H_{\gamma}$. Since $h$ was arbitrary in $H_{\gamma}$, we conclude that $f$ is a multiplier of $H_{\gamma}$.
\end{proof}

An analytic map from $\mathbb{B}_n$ into itself is a linear fractional map \cite{CowenSzeged2000} if there is a linear operator $A$ on $\mathbb{C}^n$, two vectors $B,C$ in $\mathbb{B}_n$ and a complex number $d$ such that $$\varphi(z)=\frac{Az+B}{\langle z,C\rangle +d}\quad\text{ for } z\in{\mathbb{B}_n}.$$
Using Lemma \ref{L:multiplier} together with the aforementioned Cowen-MacCluer's result, we show that for $\varphi$ a linear fractional self-map of the unit ball, the composition operator $C_{\varphi}$ is bounded on $H_{\gamma}$ for all $\gamma>0$. In \cite{JuryJFA2008}, Jury proves that $C_{\varphi}$ is bounded on $H_{\gamma}$ for all $\gamma\geq 1$ by an approach using kernel functions. He also obtains an estimate for the norm of $C_{\varphi}$ but we do not need it here.

\begin{proposition}\label{P:boundedness_COs}
Let $\gamma>0$ be given. Suppose $\varphi$ is a linear fractional map of $\mathbb{B}_n$ into itself, then $C_{\varphi}$ is bounded on $H_{\gamma}$.
\end{proposition}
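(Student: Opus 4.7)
The plan is to reduce to the case where $C_\varphi$ is already known to be bounded, using the derivative characterization in Theorem \ref{T:Zhao-Zhu}. When $\gamma\geq n$, $H_\gamma$ is (up to an equivalent norm) the Hardy space $H^2(\mathbb{B}_n)$ or one of the standard weighted Bergman spaces, and the Cowen-MacCluer result quoted just above directly gives the conclusion. The interesting case is therefore $0<\gamma<n$; here I will fix a positive integer $k$ with $\gamma+2k>n$, so that $H_{\gamma+2k}$ is a weighted Bergman space on which $C_\varphi$ is bounded by Cowen-MacCluer. Given $h\in H_\gamma$, the equivalence (a)$\Leftrightarrow$(b) of Theorem \ref{T:Zhao-Zhu} reduces the task to showing $\partial^m(h\circ\varphi)\in H_{\gamma+2k}$ for every multi-index $m$ with $|m|=k$.

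I apply the multivariate chain rule to express $\partial^m(h\circ\varphi)$ as a finite linear combination of terms
$$\bigl[(\partial^s h)\circ\varphi\bigr]\cdot g_{s,m}(z),$$
where $|s|\leq k$ and $g_{s,m}$ is a polynomial in the partial derivatives $\partial^t\varphi_i$, $|t|\leq k$, of the components of $\varphi$. For the composition factor, Remark \ref{R:Zhao-Zhu} gives $\partial^s h\in H_{\gamma+2|s|}\subset H_{\gamma+2k}$, and boundedness of $C_\varphi$ on $H_{\gamma+2k}$ then places $(\partial^s h)\circ\varphi$ in $H_{\gamma+2k}$. For the coefficient $g_{s,m}$, the plan is to invoke Lemma \ref{L:multiplier}: I will verify directly from the rational form $\varphi(z)=(Az+B)/(\langle z,C\rangle+d)$ and the contractivity $|Az+B|\leq|\langle z,C\rangle+d|$ forced by $\varphi(\mathbb{B}_n)\subset\mathbb{B}_n$ that every partial derivative of every order of each $\varphi_i$ is bounded on $\mathbb{B}_n$. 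Then $g_{s,m}$ and all of its partial derivatives are bounded, so Lemma \ref{L:multiplier} places $g_{s,m}$ in $\mathrm{Mult}(H_{\gamma+2k})$ and the product lies in $H_{\gamma+2k}$.

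Summing the chain-rule terms yields $\partial^m(h\circ\varphi)\in H_{\gamma+2k}$, so $h\circ\varphi\in H_\gamma$ by Theorem \ref{T:Zhao-Zhu} and $C_\varphi$ sends $H_\gamma$ into itself. Boundedness is then automatic from the closed graph theorem, since point evaluations are continuous on $H_\gamma$. The main technical hurdle is the uniform bound on the partial derivatives of $\varphi$: generically $|\langle z,C\rangle+d|$ is bounded below on $\overline{\mathbb{B}_n}$ and the estimate is immediate, but in the boundary-tangential case (where the denominator vanishes at a point of $\partial\mathbb{B}_n$) one must use the compensating vanishing of $Az+B$ at that same point, forced by the self-map condition, to cancel the singular factor and then iterate to control higher-order derivatives.
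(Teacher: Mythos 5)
Your proposal is correct and follows essentially the same route as the paper: both reduce the claim, via Theorem \ref{T:Zhao-Zhu} and Remark \ref{R:Zhao-Zhu}, to Cowen--MacCluer's boundedness of $C_{\varphi}$ on a space $H_{\gamma+2k}$ with $\gamma+2k>n$ combined with Lemma \ref{L:multiplier} applied to the derivative coefficients of $\varphi$, the only difference being that you apply the order-$k$ chain rule in one step while the paper iterates the first-order chain rule, lowering the index by $2$ at each stage. Regarding your ``technical hurdle'': the paper simply observes that each $\partial_{z_j}\varphi_k$ is analytic in a neighborhood of $\overline{\mathbb{B}}_n$, and in the boundary case you worry about (denominator vanishing at a point of $\partial\mathbb{B}_n$) the self-map condition forces the numerator to vanish there to first order in every direction, which makes $\varphi$ constant, so all derivatives are trivially bounded and your cancellation argument terminates at once.
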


\begin{proof} 
Since $C_{\varphi}$ is a closed linear operator, to show that $C_{\varphi}$ is bounded on $H_{\gamma}$, it suffices to show that $h\circ\varphi$ belongs to $H_{\gamma}$ whenever $h$ belongs to $H_{\gamma}$. For $\gamma>n$, this follows from \cite[Theorem 15]{CowenSzeged2000}. 

Now consider $\gamma>\max\{0,n-2\}$. Write $\varphi=(\varphi_1,\ldots,\varphi_n)$. For each $j$, we have $\partial_{z_j}(h\circ\varphi) = (\partial_{z_1}h\circ\varphi)(\partial_{z_j}\varphi_1)+\cdots+(\partial_{z_n}h\circ\varphi)(\partial_{z_j}\varphi_{n})$. For $1\leq k\leq n$, since $\partial_{z_k}h$ belongs to $H_{\gamma+2}$ (by Remark \ref{R:Zhao-Zhu}) and $\gamma+2>n$, we see that $\partial_{z_k}h\circ\varphi$ also belongs to $H_{\gamma+2}$. On the other hand, since $\partial_{z_j}\varphi_k$ is analytic in a neighborhood of the closed unit ball, it satisfies the hypothesis of Lemma \ref{L:multiplier}. Therefore by Lemma \ref{L:multiplier}, the product $(\partial_{z_k}h\circ\varphi)(\partial_{z_j}\varphi_k)$ belongs to $H_{\gamma+2}$. Thus, $\partial_{z_j}(h\circ\varphi)$ is in $H_{\gamma+2}$ for all $1\leq j\leq n$. Now Remark \ref{R:Zhao-Zhu} shows that $h\circ\varphi$ belongs to $H_{\gamma}$.

Repeating the above argument, we obtain the conclusion of the proposition for $\gamma>\max\{0,n-4\}$, then $\gamma>\max\{0,n-6\}$, and so on. Therefore the conclusion holds for all $\gamma>0$.
\end{proof}

\begin{remark} \label{R:boundedness_WCOs}
Proposition \ref{P:boundedness_COs} together with Lemma \ref{L:multiplier} shows that if $\varphi$ is a linear fractional self-map of $\mathbb{B}_n$ and $f$ is analytic on an open neighborhood of $\overline{\mathbb{B}}_n$, then the weighted composition operator $W_{f,\varphi}$ is bounded on $H_{\gamma}$ for all $\gamma>0$.
\end{remark}

We close this section with some elementary properties of bounded weighted composition operators. Suppose $W_{f,\varphi}$ is bounded on $H_{\gamma}$ for some $\gamma>0$. Then the action of the adjoint $W_{f,\varphi}^{*}$ on the kernel functions can be computed easily. Indeed, for any $z,w$ in $\mathbb{B}_n$, by the properties of the reproducing kernel functions,
\begin{align*}
(W^*_{f,\varphi}K^{\gamma}_z)(w) & = \langle W^{*}_{f,\varphi}K^{\gamma}_z,K^{\gamma}_w\rangle = \langle K^{\gamma}_z,f\cdot(K^{\gamma}_w\circ\varphi)\rangle\\
& = \overline{f}(z)\overline{K^{\gamma}_{w}(\varphi(z))} = \overline{f}(z)K^{\gamma}_{\varphi(z)}(w).
\end{align*}
This gives the well known formula 
\begin{equation}\label{Eqn:adjoint_WCO}
W^{*}_{f,\varphi}K^{\gamma}_z = \overline{f}(z)K^{\gamma}_{\varphi(z)}.
\end{equation}

It is straight forward that the set of bounded weighted composition operators on any $H_{\gamma}$ is closed under operator multiplication. In fact for analytic functions $f,g$ and analytic self-maps $\varphi,\psi$ of $\mathbb{B}_n$ for which both $W_{f,\varphi}$ and $W_{g,\psi}$ are bounded on some $H_{\gamma}$, we have
\begin{align}\label{Eqn:multiplication_WCOs}
W_{f,\varphi}W_{g,\psi} & = W_{f\cdot g\circ\varphi, \psi\circ\varphi}.
\end{align}

Another elementary fact we would like to mention is that each non-zero weighted composition operator $W_{f,\varphi}$ is determined uniquely by the pair $f$ and $\varphi$. In fact, suppose $W_{f,\varphi}=W_{g,\psi}$ on $H_{\gamma}$ and $f$ is not identically zero. Then since $f=W_{f,\varphi}K^{\gamma}_0$ and $g=W_{g,\psi}K^{\gamma}_0$, we obtain $f=g$. Now for any $h\in H_{\gamma}$, since $f\cdot(h\circ\varphi-h\circ\psi)=0$ and $f$ is not identically zero, we have $h\circ\varphi = h\circ\psi$. Write $\varphi=(\varphi_1,\ldots,\varphi_n)$ and $\psi=(\psi_1,\ldots,\psi_n)$. Choosing $h(z)=z_j$, we conclude that $\varphi_j=\psi_j$ for $j=1,\ldots, n$. Thus, $\varphi=\psi$.

\section{Unitary weighted composition operators}

Unitary weighted composition operators have been used in the study of Toeplitz operators on Hardy and Bergman spaces, see for example \cite[p.~189]{ZhuAMS2007}. In this section we will characterize all unitary weighted composition operators. In fact, we will show that $W_{f,\varphi}$ is unitary on $H_{\gamma}$ if and only if $\varphi$ is an automorphism and $f$ is a constant multiple of a reproducing kernel function associated with $\varphi$. 

For $a\in\mathbb{B}_n$, we define the normalized reproducing kernel $k^{\gamma}_a$ by
\begin{equation*}
k^{\gamma}_a(w) = K^{\gamma}_a(w)/\|K^{\gamma}_a\| = \frac{(1-|a|^2)^{\gamma/2}}{(1-\langle w,a\rangle)^{\gamma}}\quad\text{ for } w\in\mathbb{B}_n.
\end{equation*}
Let $\varphi_a$ be the Moebius automorphism of the ball that interchanges $0$ and $a$. The formulas in \cite[Section 2.2.1]{RudinSpringer1980} show that $\varphi_{a}$ is a linear fractional map of $\mathbb{B}_n$. Put $U_a=W_{k^{\gamma}_a,\varphi_a}$, the weighted composition operator on $H_{\gamma}$ given by $\varphi_a$ and $k^{\gamma}_a$. By Remark \ref{R:boundedness_WCOs}, $U_a$ is a bounded operator. It turns out that $U_a$ is in fact a self-adjoint unitary operator, that is, $U_a^{*}=U_a$ and $U^2_{a}=1$. This fact is well known and it is a consequence of a change of variables when $H_{\gamma}$ is a weighted Bergman space ($\gamma>n$) or the Hardy space ($\gamma=n$). See \cite[Proposition 1.13]{ZhuSpringer2005} for weighted Bergman spaces and \cite[Proposition 4.2]{ZhuSpringer2005} for the Hardy space. On these spaces, one has the relation \cite[p.~189]{ZhuAMS2007} $U_aT_{\eta}U_a = T_{\eta\circ\varphi_a}$, where $T_{\eta}$ denotes the Toeplitz operator with symbol $\eta$.

For other values of $\gamma$, for example, the Drury-Arveson space, the inner product on $H_{\gamma}$ does not come from a measure on $\mathbb{B}_n$ so the approach using integral formulas does not seem to work. Our approach here makes use of the kernel functions and it works for all $\gamma>0$. We in fact show that for each given $\gamma>0$, for each automorphism $\psi$ of $\mathbb{B}_n$, there corresponds a weight function $f$ for which $W_{f,\psi}$ is a unitary operator on $H_{\gamma}$. The function $f$ depends on $\psi$ and the value of $\gamma$.

\begin{proposition}\label{P:unitary_auto}
Let $\psi$ be an automorphism of $\mathbb{B}_n$. Put $a=\psi^{-1}(0)$ and $b=\psi(0)$. Then the weighted composition operator $W_{k^{\gamma}_{a},\psi}$ is a unitary operator on $H_{\gamma}$ and $W_{k^{\gamma}_{a},\psi}^{*}=W_{k^{\gamma}_a,\psi}^{-1}=W_{k^{\gamma}_{b},\psi^{-1}}$.
\end{proposition}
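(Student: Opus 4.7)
The plan is to exploit the standard description of ball automorphisms and the Möbius kernel identity. Every automorphism $\psi$ of $\mathbb{B}_n$ factors as $\psi = U \circ \varphi_a$ for some unitary $U$ on $\mathbb{C}^n$, so $b = \psi(0) = U(a)$ and hence $|a| = |b|$. Because $\psi$ and $\psi^{-1}$ are linear fractional and the weights $k^{\gamma}_{a}, k^{\gamma}_{b}$ are analytic in a neighborhood of $\overline{\mathbb{B}}_n$, Remark \ref{R:boundedness_WCOs} makes both $W_{k^{\gamma}_{a},\psi}$ and $W_{k^{\gamma}_{b},\psi^{-1}}$ bounded on $H_{\gamma}$. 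The central analytic tool is the classical identity
$$
1 - \langle \psi(u), \psi(v)\rangle \;=\; \frac{(1-|a|^2)\bigl(1-\langle u,v\rangle\bigr)}{(1-\langle u,a\rangle)(1-\langle a,v\rangle)},
$$
which follows from the well-known formula for $\varphi_a$ in \cite[Section 2.2]{RudinSpringer1980} together with $\langle U\xi, U\eta\rangle = \langle \xi,\eta\rangle$.

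The first step is to verify $W^{*}_{k^{\gamma}_{a},\psi} = W_{k^{\gamma}_{b},\psi^{-1}}$ by testing on the dense set of reproducing kernels. Formula \eqref{Eqn:adjoint_WCO} gives $W^{*}_{k^{\gamma}_{a},\psi}K^{\gamma}_{z} = \overline{k^{\gamma}_{a}(z)}\,K^{\gamma}_{\psi(z)}$, while by direct computation $(W_{k^{\gamma}_{b},\psi^{-1}}K^{\gamma}_{z})(w) = k^{\gamma}_{b}(w)\,K^{\gamma}_{z}(\psi^{-1}(w))$. Writing both sides out reduces the desired equality (after using $|a|=|b|$) to
$$
(1-\langle a,z\rangle)\,(1-\langle w,\psi(z)\rangle) \;=\; (1-\langle w,b\rangle)\,(1-\langle \psi^{-1}(w),z\rangle),
$$
raised to the $\gamma$-th power. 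This follows from the Möbius identity by substituting $u = \psi^{-1}(w)$ and $v = z$; the $\gamma$-th power branches agree because both analytic expressions equal $1$ at $w = 0$ (where $\psi^{-1}(0) = a$).

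The second step is to show $W_{k^{\gamma}_{b},\psi^{-1}}$ is the two-sided inverse of $W_{k^{\gamma}_{a},\psi}$. By the multiplication rule \eqref{Eqn:multiplication_WCOs},
$$
W_{k^{\gamma}_{b},\psi^{-1}}\,W_{k^{\gamma}_{a},\psi} \;=\; W_{k^{\gamma}_{b}\cdot (k^{\gamma}_{a}\circ\psi^{-1}),\;\mathrm{id}}.
$$
Specializing the Möbius identity to $v = 0$ yields $(1-\langle w,b\rangle)(1-\langle \psi^{-1}(w),a\rangle) = 1-|a|^2$, and combined with $|a|=|b|$ this gives $k^{\gamma}_{b}(w)\cdot k^{\gamma}_{a}(\psi^{-1}(w)) \equiv 1$ on $\mathbb{B}_n$. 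Hence the product operator equals $W_{1,\mathrm{id}} = I$, and the reversed composition is handled identically by interchanging the roles of $a$ with $b$ and $\psi$ with $\psi^{-1}$. Combining with the first step then shows that $W_{k^{\gamma}_{a},\psi}$ is unitary and that both $W^{*}_{k^{\gamma}_{a},\psi}$ and $W^{-1}_{k^{\gamma}_{a},\psi}$ coincide with $W_{k^{\gamma}_{b},\psi^{-1}}$.

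The main obstacle is the consistent handling of the fractional-power branches of $(1-\langle \cdot,\cdot\rangle)^{\gamma}$ when $\gamma$ is not an integer: the algebraic Möbius identity must be lifted through $\gamma$-th powers, and the principal branches have to match on both sides. This is manageable because each relevant expression is analytic and nonvanishing on $\mathbb{B}_n$ (or $\mathbb{B}_n\times\mathbb{B}_n$) and takes the value $1$ at the natural base point, so the $\gamma$-th powers are pinned down uniquely by analytic continuation.
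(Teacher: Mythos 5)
Your proof is correct and takes essentially the same approach as the paper: boundedness from Remark \ref{R:boundedness_WCOs}, the Rudin identity for automorphisms yielding $|a|=|b|$ and the relation $k^{\gamma}_{b}\cdot(k^{\gamma}_{a}\circ\psi^{-1})\equiv 1$, the multiplication rule \eqref{Eqn:multiplication_WCOs} for invertibility, and a reproducing-kernel computation with \eqref{Eqn:adjoint_WCO}. The only cosmetic difference is that you identify $W^{*}_{k^{\gamma}_{a},\psi}=W_{k^{\gamma}_{b},\psi^{-1}}$ directly on the kernels, whereas the paper checks $W^{*}_{k^{\gamma}_{a},\psi}W_{k^{\gamma}_{a},\psi}=I$ on the kernels and combines this with the invertibility already established.
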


\begin{proof}
We will make use of the identity
\begin{equation}\label{Eqn:inner_product_auto}
1-\langle\psi(z),\psi(w)\rangle = \dfrac{(1-\langle a,a\rangle)(1-\langle z,w\rangle)}{(1-\langle z,a\rangle)(1-\langle a,w\rangle)},
\end{equation}
which holds for all $z,w\in\overline{\mathbb{B}}_n$ (see \cite[Theorem 2.2.5]{RudinSpringer1980}). With $z=w=0$, \eqref{Eqn:inner_product_auto} gives $|b|=|\psi(0)|=|a|$. For any $z\in\mathbb{B}_n$, we have
\begin{align*}
k^{\gamma}_{b}(\psi(z)) & = \dfrac{(1-|b|^2)^{\gamma/2}}{(1-\langle\psi(z),b\rangle)^{\gamma}}= \dfrac{(1-|b|^2)^{\gamma/2}}{(1-\langle\psi(z),\psi(0)\rangle)^{\gamma}}\\
& = \dfrac{(1-|b|^2)^{\gamma/2}\cdot (1-\langle z,a\rangle)^{\gamma}}{(1-|a|^2)^{\gamma}}\quad\text{ (by \eqref{Eqn:inner_product_auto} with $w=0$)}\\
& = \Big(\dfrac{1-|b|^2}{1-|a|^2}\Big)^{\gamma/2}\dfrac{1}{k^{\gamma}_{a}(z)} = \dfrac{1}{k^{\gamma}_{a}(z)}\quad\text{ (since $|b|=|a|$)}.
\end{align*}
We obtain
\begin{equation}\label{Eqn:identity_auto}
k^{\gamma}_{a}(z)\cdot k^{\gamma}_{b}(\psi(z)) = 1 \quad\text{ for all } z\in\mathbb{B}_n.
\end{equation}

By Remark \ref{R:boundedness_WCOs}, the operators $W_{k^{\gamma}_a,\psi}$ and $W_{k^{\gamma}_b,\psi^{-1}}$ are bounded on $H_{\gamma}$. For $h\in H_{\gamma}$, \eqref{Eqn:identity_auto} gives $W_{k^{\gamma}_a,\psi}W_{k^{\gamma}_b,\psi^{-1}}h  =k^{\gamma}_a\cdot(k^{\gamma}_b\circ\psi)\cdot h = h$. Therefore $W_{k^{\gamma}_a,\psi}W_{k^{\gamma}_b,\psi^{-1}}=I$ on $H_{\gamma}$. Similarly, $W_{k^{\gamma}_b,\psi^{-1}}W_{k^{\gamma}_a,\psi}=1$ on $H_{\gamma}$. Hence $W_{k^{\gamma}_a,\psi}$ is an invertible operator with inverse $W_{k^{\gamma}_b,\psi^{-1}}$.

Now let $z$ and $w$ be in $\mathbb{B}_n$. Using \eqref{Eqn:inner_product_auto}, we compute
\begin{align*}
\big(W_{k^{\gamma}_a,\psi}K^{\gamma}_{\psi(z)}\big)(w) & = k^{\gamma}_a(w)K^{\gamma}_{\psi(z)}(\psi(w))\\
& = \dfrac{(1-|a|^2)^{\gamma/2}}{(1-\langle w,a\rangle)^{\gamma}}\dfrac{1}{(1-\langle\psi(w),\psi(z)\rangle)^{\gamma}}\\
& = \dfrac{(1-|a|^2)^{\gamma/2}}{(1-\langle w,a\rangle)^{\gamma}}\dfrac{(1-\langle w,a\rangle)^{\gamma}\ (1-\langle a,z\rangle)^{\gamma}}{(1-|a|^2)^{\gamma}\ (1-\langle w,z\rangle)^{\gamma}} = \dfrac{K^{\gamma}_{z}(w)}{\overline{k^{\gamma}_{a}}(z)}.
\end{align*}
Thus $W_{k^{\gamma}_a,\psi}K^{\gamma}_{\psi(z)} = K^{\gamma}_{z}/\overline{k^{\gamma}_{a}}(z)$. Using this and formula \eqref{Eqn:adjoint_WCO}, we obtain
\begin{align*}
W_{k^{\gamma}_a,\psi}^{*}W_{k^{\gamma}_a,\psi}(K^{\gamma}_{\psi(z)}) & =\frac{1}{\overline{k^{\gamma}_a}(z)}W^{*}_{k^{\gamma}_a,\psi}(K^{\gamma}_{z}) = K^{\gamma}_{\psi(z)}.
\end{align*}
Since $z$ was arbitrary and $\psi$ is surjective,  this implies, by linearity, that $W^{*}_{k^{\gamma}_a,\psi}W_{k^{\gamma}_a,\psi}h=h$ for all $h$ in the span $\mathcal{M}$ of $\{K^{\gamma}_z: z\in\mathbb{B}_n\}$. Since $W_{k^{\gamma}_a,\psi}$ is bounded on $H_{\gamma}$ and $\mathcal{M}$ is dense in $H_{\gamma}$, we conclude that $W_{k^{\gamma}_a,\psi}^{*}W_{k^{\gamma}_a,\psi}=I$ on $H_{\gamma}$. Therefore $W_{k^{\gamma}_a,\psi}$ is an invertible isometry on $H_{\gamma}$, and hence a unitary operator.
\end{proof}

\begin{corollary}\label{C:unitary_involution}
For any $a$ in $\mathbb{B}_n$, the operator $U_a = W_{k^{\gamma}_a,\varphi_a}$ is a self-adjoint unitary operator on $H_{\gamma}$.
\end{corollary}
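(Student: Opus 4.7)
The plan is to invoke Proposition 3.1 directly with $\psi = \varphi_a$ and verify that the two weighted composition operators appearing there coincide. First I would recall the defining property of the Moebius involution: $\varphi_a$ interchanges $0$ and $a$, so $\varphi_a(0) = a$ and $\varphi_a \circ \varphi_a = \mathrm{id}_{\mathbb{B}_n}$, which in particular gives $\varphi_a^{-1} = \varphi_a$.

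Next I would set $\psi = \varphi_a$ in Proposition 3.1 and compute the two distinguished points associated with $\psi$: namely $a' := \psi^{-1}(0) = \varphi_a(0) = a$ and $b := \psi(0) = \varphi_a(0) = a$. Thus both points collapse to $a$ itself, so that the two weighted composition operators produced by the proposition,
\begin{equation*}
W_{k^{\gamma}_{a'}, \psi} \quad\text{and}\quad W_{k^{\gamma}_{b}, \psi^{-1}},
\end{equation*}
are literally the same operator, which is by definition $U_a$.

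Proposition 3.1 then tells me that $U_a$ is unitary on $H_\gamma$ and that $U_a^* = U_a^{-1}$ equals the other operator, which we have just identified with $U_a$. Hence $U_a^* = U_a$ and $U_a^2 = U_a U_a^{-1} = I$. No real obstacle arises here since all the hard work (verifying the kernel identity \eqref{Eqn:identity_auto} and the computation $W_{k^{\gamma}_a, \psi} K^{\gamma}_{\psi(z)} = K^{\gamma}_z / \overline{k^{\gamma}_a}(z)$) was carried out in the proof of Proposition 3.1; the corollary is essentially just the observation that the involutive nature of $\varphi_a$ collapses the two candidate weights in the statement of that proposition into the single weight $k^\gamma_a$.
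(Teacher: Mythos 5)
Your proposal is correct and follows exactly the paper's route: apply Proposition \ref{P:unitary_auto} with $\psi=\varphi_a$, note that $\varphi_a^{-1}=\varphi_a$ forces $\psi^{-1}(0)=\psi(0)=a$, so the two operators in that proposition collapse to $U_a$ and $U_a^*=U_a^{-1}=U_a$. No issues.
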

\begin{proof}
Since $\varphi_a$ is an automorphism of $\mathbb{B}_n$ with $\varphi_{a}^{-1}=\varphi_a$ and $a=\varphi^{-1}_a(0)$, the corollary follows immediately from Proposition \ref{P:unitary_auto}.
\end{proof}

For any linear operator $V$ on $\mathbb{C}^n$ with $\|V\|\leq 1$, put $\psi_{V}(z)=Vz$ for $z\in\mathbb{B}_n$. Then $\psi_{V}$ is an analytic self-map of the unit ball. We denote by $C_{V}$ the composition operator $C_{\psi_{V}}$ on $H_{\gamma}$. Lemma 8.1 in \cite{CowenCRCP1995} shows that $C_{V}$ is bounded on any $H_{\gamma}$ and $C_{V}^{*}=C_{V^{*}}$ (the boundedness of $C_{V}$ also follows from Proposition \ref{P:boundedness_COs}). When $V$ is unitary, we obtain

\begin{corollary}\label{C:unitary_linear}
For any unitary operator $V$ of $\mathbb{C}^n$, the composition operator $C_{V}$ is a unitary operator on $H_{\gamma}$ with adjoint $C^{*}_{V}=C_{V^{*}} = C_{V^{-1}}$.
\end{corollary}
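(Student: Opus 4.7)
The plan is to obtain Corollary \ref{C:unitary_linear} as a direct specialization of Proposition \ref{P:unitary_auto}. First I would observe that since $V$ is unitary on $\mathbb{C}^n$, it preserves the Euclidean norm, so $\psi_V(z)=Vz$ maps $\mathbb{B}_n$ bijectively onto itself with analytic inverse $\psi_{V^{-1}}$; hence $\psi_V$ is an automorphism of $\mathbb{B}_n$. Moreover, $\psi_V(0)=0$ and $\psi_V^{-1}(0)=0$, so in the notation of Proposition \ref{P:unitary_auto} we have $a=b=0$.

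Next I would compute the relevant weight. By the definition of the normalized reproducing kernel, $k^{\gamma}_0(w)=(1-|0|^2)^{\gamma/2}/(1-\langle w,0\rangle)^{\gamma}=1$ for all $w\in\mathbb{B}_n$. Therefore $W_{k^{\gamma}_0,\psi_V}$ is literally the composition operator $C_{\psi_V}=C_V$, and likewise $W_{k^{\gamma}_0,\psi_V^{-1}}=C_{\psi_V^{-1}}=C_{V^{-1}}$.

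Proposition \ref{P:unitary_auto} applied to $\psi=\psi_V$ then gives at once that $C_V$ is unitary on $H_{\gamma}$ with $C_V^{*}=C_V^{-1}=C_{V^{-1}}$. Since $V$ is unitary on $\mathbb{C}^n$ we have $V^{-1}=V^{*}$, so $C_{V^{-1}}=C_{V^{*}}$, yielding the stated equalities $C_V^{*}=C_{V^{*}}=C_{V^{-1}}$.

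There is no real obstacle here: the entire content of the corollary is packaged in the observation that the trivial weight $k^{\gamma}_0\equiv 1$ converts the weighted composition operator in Proposition \ref{P:unitary_auto} into an ordinary composition operator. The only small point to verify carefully is that $\psi_V$ qualifies as an automorphism of $\mathbb{B}_n$ (so that Proposition \ref{P:unitary_auto} applies), which is immediate from $V$ being unitary on $\mathbb{C}^n$.
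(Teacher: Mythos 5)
Your argument is correct and is exactly the first route the paper itself indicates: specialize Proposition \ref{P:unitary_auto} to $\psi=\psi_V$, noting $a=b=0$ and $k^{\gamma}_0\equiv 1$, so the weighted composition operators reduce to $C_V$ and $C_{V^{-1}}=C_{V^{*}}$. The paper also records a shorter alternative, namely combining $C_{V}^{*}=C_{V^{*}}$ (from Lemma 8.1 of \cite{CowenCRCP1995}) with the identities $C_{V^{*}}C_{V}=C_{VV^{*}}=I=C_{V^{*}V}=C_{V}C_{V^{*}}$, but your specialization is equally valid.
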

\begin{proof}
The corollary can be proved by using Proposition \ref{P:unitary_auto} together with the fact that $\psi_{V}$ is an automorphism of $\mathbb{B}_n$ with $\psi^{-1}_V = \psi_{V^{-1}}$ and $\psi(0)=0$. It also follows (more easily) from the identities
\begin{equation*}
C_{V^{*}}C_{V} = C_{VV^{*}} = I = C_{V^{*}V} = C_{V}C_{V^{*}}.\qedhere
\end{equation*}
\end{proof}

Now assume that $\varphi, \psi$ are analytic self-maps of the unit ball and $f,g$ are analytic functions such that the weighted composition operators $W_{f,\varphi}$ and $W_{g,\psi}$ are bounded on $H_{\gamma}$. We seek necessary and sufficient conditions for which $W_{f,\varphi}W_{g,\psi}^{*}=I$ on $H_{\gamma}$.

Consider first the case $\varphi(0)=0$. For any $z$ in $\mathbb{B}_n$, by \eqref{Eqn:adjoint_WCO}, we have $W_{g,\psi}^{*}K^{\gamma}_z=\overline{g}(z)K^{\gamma}_{\psi(z)}$, so $W_{f,\varphi}W_{g,\psi}^{*}K^{\gamma}_z = \overline{g}(z)f K^{\gamma}_{\psi(z)}\circ\varphi$. Therefore,
\begin{equation}\label{Eqn:adjoint_Inverse}
\overline{g}(z)f(w)K^{\gamma}_{\psi(z)}(\varphi(w)) = K^{\gamma}_{z}(w) \text{ for } z,w\in\mathbb{B}_n.
\end{equation}

Letting $w=0$ and using the fact that $K^{\gamma}_{\psi(z)}(\varphi(0))=K^{\gamma}_{\psi(z)}(0)=1$ and $K^{\gamma}_{z}(0)=1$ for all $z\in\mathbb{B}_n$, we obtain $\overline{g}(z)f(0)=1$, which gives $g(z)=1/\overline{f(0)}$. Thus, $g$ is a constant function.

Letting $z=0$ in \eqref{Eqn:adjoint_Inverse} gives $(f(0))^{-1}f(w)K^{\gamma}_{\psi(0)}(\varphi(w)) = K^{\gamma}_{0}(w) = 1$, which implies $f(w) = f(0)/K^{\gamma}_{\psi(0)}(\varphi(w))$ for $w\in\mathbb{B}_n$. Substituting this into \eqref{Eqn:adjoint_Inverse}, we obtain $K^{\gamma}_{\psi(z)}(\varphi(w))/K^{\gamma}_{\psi(0)}(\varphi(w))=K^{\gamma}_{z}(w)$. Thus
\begin{equation*}
\dfrac{(1-\langle\varphi(w),\psi(z)\rangle)^{-\gamma}}{(1-\langle\varphi(w),\psi(0)\rangle)^{-\gamma}} = (1-\langle w,z\rangle)^{-\gamma}.
\end{equation*}
This gives (here we need to use the continuity of $\varphi$ and $\psi$ on $\mathbb{B}_n$)
\begin{equation*}
\dfrac{1-\langle\varphi(w),\psi(z)\rangle}{1-\langle\varphi(w),\psi(0)\rangle} = 1-\langle w,z\rangle\quad\text{ for all $z,w\in\mathbb{B}_n$},
\end{equation*}
which implies
\begin{equation}\label{Eqn:adjoint_Inverse_Composition}
\Big\langle\frac{\varphi(w)}{1-\langle\varphi(w),\psi(0)\rangle}, \psi(z)-\psi(0)\Big\rangle  = \langle w,z\rangle.
\end{equation}
By Lemma \ref{L:unitary_on_Cn} below, there is an invertible linear operator $A$ on $\mathbb{C}^n$ such that $\psi(z)=\psi(0)+Az$ and $\varphi(w)=(1-\langle\varphi(w),\psi(0)\rangle)(A^{*})^{-1}w$ for $z,w\in\mathbb{B}_n$. The latter implies
\begin{align*}
\langle\varphi(w),\psi(0)\rangle & = (1-\langle\varphi(w),\psi(0)\rangle)\cdot\langle(A^{*})^{-1}w,\psi(0)\rangle,
\end{align*}
which gives
\begin{align*}
1-\langle\varphi(w),\psi(0)\rangle = \frac{1}{1+\langle (A^{*})^{-1}w,\psi(0)\rangle} =  \frac{1}{1+\langle w,A^{-1}\psi(0)\rangle}.
\end{align*}
Therefore $\varphi$ is a linear fractional map given by the formula $$\varphi(w) = \frac{(A^{*})^{-1}w}{1+\langle w,A^{-1}\psi(0)\rangle} \quad\text{ for all } w\in\mathbb{B}_n.$$ 

It turns out that in order for $\varphi$ and $\psi$ to be self-maps of the unit ball, $\psi(0)$ must be zero. To show this, we will make use of Cowen-MacCluer's results \cite{CowenSzeged2000} on linear fractional maps. By the definition on \cite[p.~369]{CowenSzeged2000}, the adjoint map of $\varphi$ has the formula $\sigma(w)=A^{-1}w-A^{-1}\psi(0)$. Since $\varphi$ is a self-map of the unit ball, \cite[Proposition 11]{CowenSzeged2000} implies that $\sigma$ is also a self-map of the unit ball. On the other hand, it is clear that $\psi\circ\sigma = \sigma\circ\psi = {\rm id}_{\mathbb{B}_n}$, the identity map of $\mathbb{B}_n$. This shows that both $\psi$ and $\sigma$ are automorphisms of $\mathbb{B}_n$.

To finish the proof, we use the description of the automorphism group of the unit ball \cite[Theorem 2.2.5]{RudinSpringer1980}, which in particular says that any automorphism that does not fix the origin must be a linear fractional map with a non-constant denominator. Since the denominator of $\psi$ is a constant, $\psi$ must fix the origin: $\psi(0)=0$. Therefore we obtain $\varphi(w)=(A^{*})^{-1}w$ and $\psi(w)=Aw$ for $w\in\mathbb{B}_n$. But $\varphi$ and $\psi$ map the unit ball into itself, hence $A$ is a unitary operator. Since $(A^{*})^{-1}=A$, we see that $\varphi(w)=Aw=\psi(w)$ for $w\in\mathbb{B}_n$. Furthermore, since $\psi(0)=0$, we have $$f(w)=f(0)/K^{\gamma}_{\psi(0)}(\varphi(w))=f(0)/K^{\gamma}_{0}(\varphi(w))=f(0),$$ which is a constant function. Since $g(w)=1/\overline{f(0)}$, we have $f(w)\overline{g}(w)=1$ for all $w\in\mathbb{B}_n$.

Thus we have shown the `only if' part of the following proposition. The `if' part is much easier and it follows from Corollary \ref{C:unitary_linear}.
\begin{proposition}\label{P:adjoint_inverse_WCOs}
Let $f,g$ be analytic functions on $\mathbb{B}_n$ and let $\varphi,\psi$ be analytic self-maps of $\mathbb{B}_n$ with $\varphi(0)=0$. Then $W_{f,\varphi}W^{*}_{g,\psi}=I$ on $H_{\gamma}$ if and only if $f, g$ are constant functions with $f\overline{g}\equiv 1$ and there is a unitary operator $A$ on $\mathbb{C}^n$ so that $\varphi(w)=\psi(w)=Aw$ for $w\in\mathbb{B}_n$. In this case, $W_{f,\varphi}$ and $W_{g,\psi}$ are constant multiples of a unitary composition operator.
\end{proposition}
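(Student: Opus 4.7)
My plan is to separate the two directions and focus essentially all the work on the "only if" part, since the "if" part follows at once from Corollary \ref{C:unitary_linear}: if $\varphi(w)=\psi(w)=Aw$ with $A$ unitary, then $C_A$ is unitary with $C_A^*=C_{A^{-1}}=C_A^{-1}$, and multiplying by scalar symbols $f,g$ with $f\overline{g}\equiv 1$ gives $W_{f,\varphi}W_{g,\psi}^*=f\overline{g}\cdot C_AC_A^*=I$.

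For the "only if" direction, I would test the operator identity on the reproducing kernels. By formula \eqref{Eqn:adjoint_WCO}, $W_{g,\psi}^*K_z^{\gamma}=\overline{g(z)}K_{\psi(z)}^{\gamma}$, so applying $W_{f,\varphi}$ and matching against $K_z^{\gamma}$ gives the pointwise identity
\begin{equation*}
\overline{g(z)}\,f(w)\,K_{\psi(z)}^{\gamma}(\varphi(w)) = K_z^{\gamma}(w), \qquad z,w\in\mathbb{B}_n.
\end{equation*}
Since $\varphi(0)=0$, setting $w=0$ collapses both kernel values to $1$ and forces $g\equiv 1/\overline{f(0)}$ (in particular, $g$ is constant). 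Setting $z=0$ then yields $f(w)=f(0)/K_{\psi(0)}^{\gamma}(\varphi(w))$. Substituting these back converts the identity into the purely scalar equation
\begin{equation*}
\frac{1-\langle\varphi(w),\psi(z)\rangle}{1-\langle\varphi(w),\psi(0)\rangle}=1-\langle w,z\rangle,
\end{equation*}
which after rearrangement becomes a bilinear identity
\begin{equation*}
\Bigl\langle \tfrac{\varphi(w)}{1-\langle\varphi(w),\psi(0)\rangle},\,\psi(z)-\psi(0)\Bigr\rangle = \langle w,z\rangle.
\end{equation*}

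Extracting geometric information from this bilinear identity is the main obstacle. I would package it as a linear algebra lemma (Lemma \ref{L:unitary_on_Cn} below) asserting that if two maps $u(w)$ and $v(z)$ on $\mathbb{B}_n$ satisfy $\langle u(w),v(z)\rangle=\langle w,z\rangle$, then there is an invertible $A$ on $\mathbb{C}^n$ with $v(z)=Az$ and $u(w)=(A^*)^{-1}w$ (modulo the shift by $\psi(0)$). Applying this to my situation gives $\psi(z)=\psi(0)+Az$ and $\varphi(w)=(1-\langle\varphi(w),\psi(0)\rangle)(A^*)^{-1}w$; solving the latter for $1-\langle\varphi(w),\psi(0)\rangle$ produces an explicit linear fractional formula for $\varphi$ with denominator $1+\langle w,A^{-1}\psi(0)\rangle$.

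It remains to eliminate the shift, i.e.\ show $\psi(0)=0$, and here I would invoke the Cowen--MacCluer theory of adjoint maps of linear fractional self-maps of $\mathbb{B}_n$. The adjoint of $\varphi$ is $\sigma(w)=A^{-1}w-A^{-1}\psi(0)$, which must also be a self-map of $\mathbb{B}_n$. Direct check shows $\psi\circ\sigma=\sigma\circ\psi=\mathrm{id}_{\mathbb{B}_n}$, so both $\psi$ and $\sigma$ are automorphisms. Rudin's classification of $\mathrm{Aut}(\mathbb{B}_n)$ forces any automorphism with constant denominator to fix the origin, so $\psi(0)=0$. Then $A$ is unitary, $\varphi=\psi$, the formula for $f$ reduces to the constant $f(0)$, and $f\overline{g}\equiv 1$ is already in hand, completing the proof.
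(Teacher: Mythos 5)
Your proposal is correct and follows essentially the same route as the paper's own argument: testing on kernel functions, specializing at $z=0$ and $w=0$, reducing to the bilinear identity handled by Lemma \ref{L:unitary_on_Cn}, and then using the Cowen--MacCluer adjoint map together with Rudin's description of ${\rm Aut}(\mathbb{B}_n)$ to force $\psi(0)=0$ and $A$ unitary. The only detail worth noting is that passing from the identity of $-\gamma$ powers to the linear identity $\frac{1-\langle\varphi(w),\psi(z)\rangle}{1-\langle\varphi(w),\psi(0)\rangle}=1-\langle w,z\rangle$ deserves a word (the paper appeals to continuity of $\varphi$ and $\psi$), but this is a minor point, not a gap.
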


The general case (without the assumption $\varphi(0)=0$) now follows from Proposition \ref{P:adjoint_inverse_WCOs} after multiplying both $W_{f,\varphi}$ and $W_{g,\psi}$ by a unitary operator. 

\begin{theorem}\label{T:adjoint_inverseWCOs}
Let $f,g$ be analytic functions on $\mathbb{B}_n$ and let $\varphi,\psi$ be analytic self-maps of $\mathbb{B}_n$. Then $W_{f,\varphi}W^{*}_{g,\psi}=I$ on $H_{\gamma}$ if and only if $\varphi=\psi$, an automorphism of $\mathbb{B}_n$; and there is a constant $\lambda\neq 0$ such that $f=\lambda k^{\gamma}_{a}$ and $g=(1/{\overline{\lambda}})k^{\gamma}_{a}$, where $a=\varphi^{-1}(0)$.  Furthermore, both $W_{f,\varphi}$ and $W_{g,\psi}$ are constant multiples of the unitary operator $W_{k^{\gamma}_{a},\varphi}$. 
\end{theorem}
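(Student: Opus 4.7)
The plan is to reduce the general statement to Proposition \ref{P:adjoint_inverse_WCOs} by post-multiplying both $W_{f,\varphi}$ and $W_{g,\psi}$ by the self-adjoint unitary $U_b = W_{k^\gamma_b,\varphi_b}$ from Corollary \ref{C:unitary_involution}, where $b=\varphi(0)$. The ``if'' direction is essentially immediate from Proposition \ref{P:unitary_auto}: writing $f=\lambda k^\gamma_a$ and $g=(1/\overline\lambda)k^\gamma_a$ with $a=\varphi^{-1}(0)$ and $\psi=\varphi$ an automorphism, one has $W_{f,\varphi}=\lambda W_{k^\gamma_a,\varphi}$ and $W^*_{g,\psi} = (1/\lambda)W^{-1}_{k^\gamma_a,\varphi}$, so their product is the identity. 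The genuine content is in the converse.

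For the ``only if'' direction, I would first apply the multiplication formula \eqref{Eqn:multiplication_WCOs} to compute
\[
W_{f,\varphi}U_b = W_{f\cdot (k^\gamma_b\circ\varphi),\, \varphi_b\circ\varphi},
\qquad
W_{g,\psi}U_b = W_{g\cdot (k^\gamma_b\circ\psi),\, \varphi_b\circ\psi}.
\]
Setting $\widetilde\varphi = \varphi_b\circ\varphi$, $\widetilde\psi = \varphi_b\circ\psi$, $\widetilde f = f\cdot(k^\gamma_b\circ\varphi)$, and $\widetilde g = g\cdot(k^\gamma_b\circ\psi)$, the new symbol $\widetilde\varphi$ satisfies $\widetilde\varphi(0)=\varphi_b(b)=0$. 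Because $U_b$ is self-adjoint unitary with $U_b^2=I$, the hypothesis rewrites as
\[
W_{\widetilde f,\widetilde\varphi}\, W^*_{\widetilde g,\widetilde\psi} = (W_{f,\varphi}U_b)(U_b W^*_{g,\psi}) = W_{f,\varphi}W^*_{g,\psi} = I.
\]
Proposition \ref{P:adjoint_inverse_WCOs} then delivers a unitary operator $A$ on $\mathbb{C}^n$ such that $\widetilde\varphi(w)=\widetilde\psi(w)=Aw$, together with constants $\widetilde f\equiv\lambda$ and $\widetilde g\equiv\mu$ satisfying $\lambda\overline\mu = 1$.

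Finally I would unwind the substitutions. The equality $\varphi_b\circ\varphi = \varphi_b\circ\psi$ gives $\varphi=\psi$ by injectivity of $\varphi_b$, and the factorization $\varphi=\varphi_b\circ\psi_A$ exhibits $\varphi$ as an automorphism of $\mathbb{B}_n$. Setting $a=\varphi^{-1}(0)$, the automorphism identity \eqref{Eqn:identity_auto} (applied to $\varphi$ in place of $\psi$) gives $k^\gamma_b(\varphi(w)) = 1/k^\gamma_a(w)$, so
\[
f(w) = \dfrac{\widetilde f(w)}{k^\gamma_b(\varphi(w))} = \lambda\, k^\gamma_a(w),
\]
and the same identity together with $\psi=\varphi$ yields $g=(1/\overline\lambda)k^\gamma_a$. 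The remaining claim that $W_{f,\varphi}$ and $W_{g,\psi}$ are scalar multiples of $W_{k^\gamma_a,\varphi}$ is then immediate. I do not anticipate a serious obstacle: the only point requiring care is the bookkeeping that the ``unitary twist'' by $U_b$ transports the equation $W\,W^*=I$ intact into the situation $\widetilde\varphi(0)=0$ where Proposition \ref{P:adjoint_inverse_WCOs} applies, after which the function-theoretic work has already been done.
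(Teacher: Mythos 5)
Your proposal is correct and follows essentially the same route as the paper: post-multiply by the self-adjoint unitary $U_b$ with $b=\varphi(0)$, apply the multiplication formula \eqref{Eqn:multiplication_WCOs} to reduce to the case $\widetilde\varphi(0)=0$ handled by Proposition \ref{P:adjoint_inverse_WCOs}, and then unwind via the identity \eqref{Eqn:identity_auto} to recover $f=\lambda k^{\gamma}_a$ and $g=(1/\overline{\lambda})k^{\gamma}_a$. The bookkeeping you flag (that the twist by $U_b$ preserves the equation $W_{f,\varphi}W^{*}_{g,\psi}=I$) is exactly the step the paper carries out, so no gap remains.
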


\begin{proof}
The `if' part follows from Proposition \ref{P:unitary_auto} so we only need to prove the `only if' part. Put $b=\varphi(0)$. Define
\begin{align*}
\tilde{f} = f\cdot k^{\gamma}_{b}\circ\varphi,\ \tilde{\varphi} = \varphi_{b}\circ\varphi\quad\text{ and }\quad \tilde{g}  = g\cdot k^{\gamma}_{b}\circ\psi,\ \tilde{\psi} = \varphi_{b}\circ\psi.
\end{align*}
Then by \eqref{Eqn:multiplication_WCOs}, $W_{\tilde{f},\tilde{\varphi}}=W_{f,\varphi}U_{b}$ and $W_{\tilde{g},\tilde{\psi}}=W_{g,\psi}U_{b}$. Since $U_b$ is a unitary, we have $W_{\tilde{f},\tilde{\varphi}}W^{*}_{\tilde{g},\tilde{\psi}} = W_{f,\varphi}W^{*}_{g,\psi}.$ Therefore the second product is the identity operator if and only if the first product is the identity operator. Since $\tilde{\varphi}(0)=\varphi_{b}(\varphi(0))=\varphi_{b}(b)=0$, by Proposition \ref{P:adjoint_inverse_WCOs}, $W_{\tilde{f},\tilde{\varphi}}W_{\tilde{g},\tilde{\psi}}^{*}=I$ on $H_{\gamma}$ if and only if $\tilde{f}, \tilde{g}$ are constant functions with $\tilde{f}\cdot\overline{\tilde{g}}\equiv 1$ and there exists a unitary operator $A$ on $\mathbb{C}^n$ such that $\tilde{\varphi}(w)=\tilde{\psi}(w)=Aw$ for $w\in\mathbb{B}_n$. The identity $\varphi_b^{-1}=\varphi_b$ now implies $\varphi(z)=\psi(z)=\varphi_{b}(Az)$ for $z\in\mathbb{B}_n$. Thus $\varphi=\psi$ and they equal an automorphism of $\mathbb{B}_n$. Suppose $\tilde{f}\equiv\lambda\neq 0$ and $\tilde{g}\equiv 1/\overline{\lambda}$. By \eqref{Eqn:identity_auto}, we obtain
\begin{equation*}
f = \frac{\tilde{f}}{k^{\gamma}_b\circ\varphi} = \frac{\lambda}{k^{\gamma}_{\varphi(0)}\circ\varphi} = \lambda k^{\gamma}_{\varphi^{-1}(0)} = \lambda k^{\gamma}_a.
\end{equation*}
Similarly, $g=(1/\overline{\lambda})k^{\gamma}_a$. Thus $W_{f,\varphi} = \lambda W_{k^{\gamma}_a,\varphi}$ and $W_{g,\psi}=(1/\overline{\lambda})W_{k^{\gamma}_a,\varphi}$.
\end{proof}

\begin{corollary}\label{C:unitary_WCOs} Let $f$ be an analytic function on $\mathbb{B}_n$ and $\varphi$ be an analytic self-map of $\mathbb{B}_n$ such that the operator $W_{f,\varphi}$ is bounded on $H_{\gamma}$ for some $\gamma>0$. Then TFAE
\begin{enumerate}[(a)]
\item $W_{f,\varphi}$ is a unitary on $H_{\gamma}$.
\item $W_{f,\varphi}$ is a co-isometry on $H_{\gamma}$.
\item $\varphi$ is an automorphism of $\mathbb{B}_n$ and $f=\lambda k^{\gamma}_{\varphi^{-1}(0)}$ for some complex number $\lambda$ with $|\lambda|=1$.
\end{enumerate}
\end{corollary}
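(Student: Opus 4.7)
The plan is to prove (a)$\Rightarrow$(b)$\Rightarrow$(c)$\Rightarrow$(a), exploiting the fact that nearly all of the work has already been done in Theorem~\ref{T:adjoint_inverseWCOs} and Proposition~\ref{P:unitary_auto}. The implication (a)$\Rightarrow$(b) is immediate from the definitions, since a unitary operator is in particular a co-isometry.

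The crucial step is (b)$\Rightarrow$(c). The plan is to apply Theorem~\ref{T:adjoint_inverseWCOs} with $g = f$ and $\psi = \varphi$: the co-isometry hypothesis is precisely $W_{f,\varphi} W^{*}_{f,\varphi} = I$, which is the setting of that theorem. The theorem then forces $\varphi = \psi$ to be an automorphism of $\mathbb{B}_n$, and supplies a non-zero scalar $\lambda$ such that simultaneously $f = \lambda k^{\gamma}_{a}$ and $f = g = (1/\overline{\lambda})\, k^{\gamma}_{a}$, where $a = \varphi^{-1}(0)$. Since $k^{\gamma}_{a}$ is not identically zero, comparing these two expressions yields $\lambda = 1/\overline{\lambda}$, i.e. $|\lambda| = 1$, which is exactly condition (c).

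Finally, for (c)$\Rightarrow$(a), Proposition~\ref{P:unitary_auto} (applied to the automorphism $\psi = \varphi$) tells us that $W_{k^{\gamma}_{a},\varphi}$ is a unitary operator on $H_{\gamma}$, with $a = \varphi^{-1}(0)$. Since unimodular scalar multiples of unitary operators remain unitary, and $W_{f,\varphi} = \lambda\, W_{k^{\gamma}_{a},\varphi}$ with $|\lambda| = 1$, we conclude that $W_{f,\varphi}$ is unitary on $H_{\gamma}$. There is no real obstacle here: the substantive content is all packaged in the preceding theorem and proposition, and the corollary reduces to specializing $(g,\psi) = (f,\varphi)$ and reading off the modulus condition on $\lambda$.
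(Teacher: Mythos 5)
Your proposal is correct and follows essentially the same route as the paper: the paper likewise disposes of $(a)\Rightarrow(b)$ as trivial and derives $(b)\Rightarrow(c)\Rightarrow(a)$ by specializing Theorem~\ref{T:adjoint_inverseWCOs} to $g=f$, $\psi=\varphi$ (whose ``if'' direction already rests on Proposition~\ref{P:unitary_auto}, which you invoke directly). Your extra step of comparing $f=\lambda k^{\gamma}_a$ with $f=(1/\overline{\lambda})k^{\gamma}_a$ to get $|\lambda|=1$ is exactly the intended reading of the theorem.
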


\begin{proof} The implication $(a)\Rightarrow (b)$ is trivial. The implications $(b)\Rightarrow (c) \Rightarrow (a)$ follow from Theorem \ref{T:adjoint_inverseWCOs} in the case $g=f$ and $\psi=\varphi$.
\end{proof}

\begin{remark} The equivalence of $(a)$ and $(b)$ in the above corollary is not surprising in one dimension. This follows from the fact that in one dimension most weighted composition operators are injective. In fact if $f$ is not identically zero and $\varphi$ is not a constant function, then $W_{f,\varphi}$ is injective on any $H_{\gamma}$ on the unit disk. In dimensions greater than one, it may happen that the kernel of $W_{f,\varphi}$ is non-trivial even in the case $f$ does not vanish and $\varphi$ is a non-constant map of $\mathbb{B}_n$. Thus, it might be surprising that all co-isometric weighted composition operators are in fact unitary on $H_{\gamma}$. Corollary \ref{C:unitary_WCOs} also shows that any unitary weighted composition operator on $H_{\gamma}$ is of the form a constant (of modulus one) multiplying a unitary operator in Proposition \ref{P:unitary_auto}.
\end{remark}

\begin{remark} The equivalence between $(a)$ and $(c)$ for weighted composition operators on the Hardy space of the unit disk is shown by Bourdon and Narayan in \cite{BourdonJMAA2010} by a different route. They show that if $W_{f,\varphi}$ is unitary, then $\varphi$ must be a univalent inner function, and hence, an automorphism of the unit disk. 
\end{remark}

In \cite{BourdonJMAA2010}, Bourdon and Narayan go on to characterize the spectra of these unitary weighted composition operators. Their spectral characterizations are based on whether the automorphism $\varphi$ is elliptic, hyperbolic or parabolic. While the case of elliptic automorphisms (which fix a point in $\mathbb{B}_n$) can be carried on to higher dimensions, we have not been able to resolve the other two cases. The following spectral description is a consequence of a result in the next section about normal weighted composition operators.

\begin{proposition}\label{P:spectrum_unitary_WCOs} Let $f$ be an analytic function and $\varphi$ an automorphism of $\mathbb{B}_n$ that fixes a point $p\in\mathbb{B}_n$. Suppose $W_{f,\varphi}$ is unitary on $H_{\gamma}$. Then $|f(p)|=1$; all eigenvalues of $\varphi'(p)$ belong to the unit circle; and the spectrum of $W_{f,\varphi}$ is the closure of the set $$\{f(p)\}\cup\{f(p)\cdot\lambda_1\cdots\lambda_s: \lambda_j\in\sigma(\varphi'(p))\text{ for } 1\leq j\leq s \text{ and } s=1,2,\ldots\}.$$
Here $\sigma(\varphi'(p))$ is the set of eigenvalues of the matrix $\varphi'(p)$.
\end{proposition}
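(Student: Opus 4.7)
The plan is to reduce to the case $p=0$ by a unitary conjugation and then diagonalize the resulting composition operator in the orthonormal basis of monomials of $H_\gamma$. First I would set $T = U_p W_{f,\varphi} U_p$, where $U_p = W_{k^\gamma_p, \varphi_p}$. By Corollary \ref{C:unitary_involution}, $U_p$ is a self-adjoint unitary, so $T$ is unitarily equivalent to $W_{f,\varphi}$ and has the same spectrum. Using the multiplication formula \eqref{Eqn:multiplication_WCOs} twice, one writes $T = W_{\tilde{f}, \tilde{\varphi}}$ with $\tilde{\varphi} = \varphi_p \circ \varphi \circ \varphi_p$ and $\tilde{f} = k^\gamma_p \cdot (f\circ\varphi_p) \cdot (k^\gamma_p\circ\varphi\circ\varphi_p)$. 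Since $\varphi(p) = p$, a short computation gives $\tilde{\varphi}(0) = 0$; evaluating $\tilde{f}$ at $0$ and using $k^\gamma_p(p) = (1-|p|^2)^{-\gamma/2}$ yields $\tilde{f}(0) = f(p)$.

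Next, since $T$ is a unitary weighted composition operator whose composition symbol fixes the origin, Corollary \ref{C:unitary_WCOs} forces $\tilde{\varphi}$ to be an automorphism fixing $0$ and $\tilde{f}$ to be a constant of modulus one. By the description of the automorphism group (\cite[Theorem 2.2.5]{RudinSpringer1980}), any automorphism fixing $0$ is linear and unitary, so $\tilde{\varphi}(z) = Uz$ for some unitary $U$ on $\mathbb{C}^n$. Combining this with $\tilde f(0) = f(p)$ computed above gives $\tilde{f} \equiv f(p)$ and hence $|f(p)| = 1$. Applying the chain rule to $\tilde{\varphi} = \varphi_p \circ \varphi \circ \varphi_p$ at $0$, together with $\varphi_p'(p) = (\varphi_p'(0))^{-1}$ (obtained by differentiating $\varphi_p \circ \varphi_p = \mathrm{id}$), gives $U = (\varphi_p'(0))^{-1}\, \varphi'(p)\, \varphi_p'(0)$. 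Thus $U$ is similar to $\varphi'(p)$, so they share the same eigenvalues $\lambda_1, \ldots, \lambda_n$, which lie on the unit circle since $U$ is unitary.

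Having $T = f(p)\cdot C_U$, the spectrum becomes $\sigma(W_{f,\varphi}) = \sigma(T) = f(p)\cdot\sigma(C_U)$. To compute $\sigma(C_U)$, I would pick a unitary $V$ on $\mathbb{C}^n$ with $VUV^{-1} = D = \mathrm{diag}(\lambda_1,\ldots,\lambda_n)$. By Corollary \ref{C:unitary_linear}, $C_V$ is unitary on $H_\gamma$, and the identity $C_A C_B = C_{\psi_B\circ\psi_A}$ gives $C_V^{-1} C_U C_V = C_{VUV^{-1}} = C_D$. For every multi-index $m \in \mathbb{N}_0^n$, $C_D(z^m) = \lambda_1^{m_1}\cdots \lambda_n^{m_n}\, z^m$, so $C_D$ is diagonalized in the orthonormal basis of normalized monomials with eigenvalues $\lambda^m := \lambda_1^{m_1}\cdots\lambda_n^{m_n}$. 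Taking closures and multiplying by $f(p)$ yields the announced spectrum: the index $m=0$ contributes $\{f(p)\}$, while multi-indices with $|m| = s \geq 1$ contribute the products of $s$ eigenvalues of $\varphi'(p)$ with repetition.

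The main obstacle lies in the last step, namely justifying that $\sigma(C_D)$ equals the closure of the set of diagonal entries $\{\lambda^m\}$. The inclusion of each $\lambda^m$ (and therefore of the closure) into the spectrum is immediate from the eigenvector relation on $z^m$; the reverse inclusion requires showing that for $\mu$ outside that closure the diagonal resolvent $(C_D - \mu I)^{-1}$ extends to a bounded operator on $H_\gamma$, which follows from a uniform lower bound $|\lambda^m - \mu| \geq \mathrm{dist}(\mu, \overline{\{\lambda^m\}}) > 0$. This is the standard spectral computation for diagonal operators on a separable Hilbert space; everything else is a clean chain of reductions from the unitarity and automorphism results already established in the section.
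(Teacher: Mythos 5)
Your argument is correct, but it takes a genuinely different route from the paper. The paper's proof is a two-line deduction: since a unitary operator is normal, it simply invokes Proposition \ref{P:spectrum_normal_WCOs} (the spectral description of normal weighted composition operators whose symbol fixes a point of $\mathbb{B}_n$, itself resting on Theorem \ref{T:normal_WCOs} and Proposition \ref{P:spectrum_normal_COs}), and then reads off $|f(p)|=1$ and $|\lambda|=1$ for $\lambda\in\sigma(\varphi'(p))$ \emph{a posteriori} from the fact that the spectrum of a unitary lies in the unit circle. You instead work directly from the unitary characterization: conjugating by $U_p$ to get $W_{\tilde f,\tilde\varphi}$ with $\tilde\varphi(0)=0$ and $\tilde f(0)=f(p)$, using Corollary \ref{C:unitary_WCOs} together with Cartan's theorem (automorphisms fixing $0$ are unitary linear maps) to conclude $T=f(p)C_U$ with $U$ unitary and $|f(p)|=1$, relating $U$ to $\varphi'(p)$ by the chain rule, and then re-deriving the diagonalization of $C_U$ via monomials; in doing so you essentially reproduce, in the special unitary case, the content of Theorem \ref{T:normal_WCOs}, Proposition \ref{P:spectrum_normal_COs}, and the discussion preceding Proposition \ref{P:spectrum_normal_WCOs}. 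What your route buys is a self-contained argument that obtains the modulus-one statements structurally (from unitarity of the weight constant and of $U$) rather than from spectral containment, and that does not lean on the normality machinery; what the paper's route buys is brevity, since all the heavy lifting has already been done for the normal case and the unitary statement follows instantly. Your handling of the one delicate point, that $\sigma(C_D)$ is the closure of $\{\lambda^m\}$ (eigenvector relation for one inclusion, bounded diagonal resolvent for the other), is exactly the standard argument the paper leaves implicit in Proposition \ref{P:spectrum_normal_COs}, so there is no gap.
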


\begin{proof} Since $W_{f,\varphi}$ is normal, the description of its spectrum follows from Proposition \ref{P:spectrum_normal_WCOs} in Section 4 below. Since the spectrum of $W_{f,\varphi}$ must be a subset of the unit circle, we conclude that $|f(p)|=1$ and $|\lambda|=1$ for any $\lambda$ in $\sigma(\varphi'(p))$.
\end{proof}

We end this section with a lemma that was used in the proof of Proposition \ref{P:adjoint_inverse_WCOs}. We only need the finite dimensional version but the infinite dimensional case is also interesting in its own right. This result might have appeared in the literature but since we are not aware of an appropriate reference, we provide here a proof.

\begin{lemma}\label{L:unitary_on_Cn} 
Let $\mathcal{M}$ be a Hilbert space with an inner product denoted by $\langle,\rangle$. Suppose $F$ and $G$ are two maps from the unit ball $\mathcal{B}$ of $\mathcal{M}$ into $\mathcal{M}$ such that $\langle F(w),G(z)\rangle = \langle w,z\rangle$ for all $w,z$ in $\mathcal{B}$. Then there is an orthogonal decomposition $\mathcal{M}=\mathcal{M}_1\oplus\mathcal{M}_2\oplus\mathcal{M}_3$; there are bounded linear operators $A, B$ from $\mathcal{M}$ into $\mathcal{M}_1$ with $B^{*}A=1$; and there are (possibly non-linear) maps $F_1:\mathcal{M}\rightarrow\mathcal{M}_2$ and $G_1:\mathcal{M}\rightarrow\mathcal{M}_3$ such that $F(w)=Aw+F_1(w)$ and $G(z)=Bz+G_1(z)$ for all $w,z$ in $\mathcal{B}$.

If $\mathcal{M}$ has finite dimension, then both $\mathcal{M}_2$ and $\mathcal{M}_3$ are $\{0\}$ and hence $F(w)=Aw$ and $G(z)=Bz=(A^{*})^{-1}z$ for $w,z\in\mathcal{B}$. If, in addition, $F$ and $G$ map $\mathcal{B}$ into itself, then $A$ is a unitary operator.

If $F=G$, then $F_1=G_1=0$; $A=B$; and hence $F(z)=G(z)=Az$ for $z\in\mathcal{B}$. Furthermore, $A$ is an isometry on $\mathcal{M}$.
\end{lemma}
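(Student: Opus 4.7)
My plan is to extract the ``linear part'' of each of $F$ and $G$ from the sesquilinear identity $\langle F(w),G(z)\rangle=\langle w,z\rangle$, package the residuals into mutually orthogonal subspaces, and derive the biorthogonality $B^{*}A=I$. Throughout, let $\mathcal{M}_F=\overline{\operatorname{span}}\{F(w):w\in\mathcal{B}\}$ and $\mathcal{M}_G=\overline{\operatorname{span}}\{G(z):z\in\mathcal{B}\}$.

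First I would observe that, for $w_1,w_2$ and $\alpha w_1+\beta w_2$ all in $\mathcal{B}$ and any $z\in\mathcal{B}$, sesquilinearity of the right-hand side gives $\langle F(\alpha w_1+\beta w_2)-\alpha F(w_1)-\beta F(w_2),G(z)\rangle=0$, so the non-linear error lies in $\mathcal{M}_G^{\perp}$. Hence $P_{\mathcal{M}_G}F$ is linear on $\mathcal{B}$, and I would extend it by scaling to a linear map $A\colon\mathcal{M}\to\mathcal{M}_G$ with $Aw=P_{\mathcal{M}_G}F(w)$ for $w\in\mathcal{B}$; symmetrically, $B\colon\mathcal{M}\to\mathcal{M}_F$ with $Bz=P_{\mathcal{M}_F}G(z)$. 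Both operators are bounded by the closed graph theorem: if $w_n\to w$ and $Aw_n\to y$, then $y\in\mathcal{M}_G$ and $\langle y,G(z)\rangle=\langle w,z\rangle$ for every $z\in\mathcal{B}$, which uniquely forces $y=Aw$. The construction also yields $\langle Aw,G(z)\rangle=\langle w,z\rangle$ whenever $z\in\mathcal{B}$ and, symmetrically, $\langle F(w),Bz\rangle=\langle w,z\rangle$ whenever $w\in\mathcal{B}$.

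Next I would set $F_1(w)=F(w)-Aw\in\mathcal{M}_G^{\perp}$ and $G_1(z)=G(z)-Bz\in\mathcal{M}_F^{\perp}$ on $\mathcal{B}$. Expanding $\langle F(w)-Aw,G(z)-Bz\rangle$ with the displayed identities yields
\[
\langle F_1(w),G_1(z)\rangle=\langle Aw,Bz\rangle-\langle w,z\rangle.
\]
The hardest part will be to show that in fact $\operatorname{Ran}(A)\subseteq\mathcal{M}_F$ (and symmetrically $\operatorname{Ran}(B)\subseteq\mathcal{M}_G$); once this is in place, $Aw\in\mathcal{M}_F$ is orthogonal to $G_1(z)\in\mathcal{M}_F^{\perp}$, and the expansion $\langle Aw,G(z)\rangle=\langle Aw,Bz\rangle+\langle Aw,G_1(z)\rangle$ forces $\langle Aw,Bz\rangle=\langle w,z\rangle$, i.e., $B^{*}A=I$, and consequently $\langle F_1(w),G_1(z)\rangle=0$. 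I would then take $\mathcal{M}_1$ to be the closed span of $\operatorname{Ran}(A)\cup\operatorname{Ran}(B)$ (a closed subspace of $\mathcal{M}_F\cap\mathcal{M}_G$), set $\mathcal{M}_2=\overline{\operatorname{span}}\{F_1(w):w\in\mathcal{B}\}$, and define $\mathcal{M}_3=\mathcal{M}_1^{\perp}\ominus\mathcal{M}_2$; the vanishing of $\langle F_1(w),G_1(z)\rangle$ places $G_1(z)$ in $\mathcal{M}_3$, and $F_1,G_1$ may be extended from $\mathcal{B}$ to all of $\mathcal{M}$ however one pleases.

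Finally, the specializations are quick. In finite dimensions, $A^{*}G(z)=z$ for $z\in\mathcal{B}$ makes $A^{*}|_{\mathcal{M}_G}$ surjective onto $\mathcal{M}$, so a dimension count forces $\mathcal{M}_G=\mathcal{M}$, and symmetrically $\mathcal{M}_F=\mathcal{M}$; consequently $\mathcal{M}_2=\mathcal{M}_3=\{0\}$, $F(w)=Aw$, $G(z)=Bz$, and $B^{*}A=I$ together with the injectivity of $A$ in finite dimensions gives $B=(A^{*})^{-1}$. If $F$ and $G$ additionally map $\mathcal{B}$ into $\mathcal{B}$, then both $A$ and $(A^{*})^{-1}$ are contractions, which in a finite-dimensional space forces $A$ to be unitary. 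When $F=G$, one has $\mathcal{M}_F=\mathcal{M}_G$ and hence $A=B$ by the construction, so $A^{*}A=I$ and $A$ is an isometry on $\mathcal{M}$; the common residual $F_1=G_1$ then lies in $\mathcal{M}_2\cap\mathcal{M}_3=\{0\}$.
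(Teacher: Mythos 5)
Your construction of $A$ (linearity of $P_{\mathcal{M}_G}F$ on $\mathcal{B}$ by testing against the dense span of $\{G(z)\}$, then boundedness via the closed graph theorem) matches the paper's, but the step you flag as ``the hardest part'' --- $\operatorname{Ran}(A)\subseteq\mathcal{M}_F$ --- is not merely hard: it is false in general, and with it your route to $B^{*}A=1$ and to $\langle F_1(w),G_1(z)\rangle=0$ collapses. Concretely, take $\mathcal{M}=\ell^2$ with orthonormal basis $\{e_n\}_{n\ge 1}$, let $Se_n=e_{n+2}$, and set $F(w)=Sw+\langle w,e_1\rangle e_1$, $G(z)=Sz+\langle z,e_1\rangle e_2$; then $\langle F(w),G(z)\rangle=\langle w,z\rangle$ for all $w,z\in\mathcal{B}$. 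Here $\mathcal{M}_F=\overline{\operatorname{span}}\{e_1+e_3,e_4,e_5,\dots\}$ and $\mathcal{M}_G=\overline{\operatorname{span}}\{e_2+e_3,e_4,e_5,\dots\}$, and your operators give $Ae_1=P_{\mathcal{M}_G}(e_1+e_3)=\tfrac12(e_2+e_3)\notin\mathcal{M}_F$ (every element of $\mathcal{M}_F$ has zero $e_2$-component) and $Be_1=P_{\mathcal{M}_F}(e_2+e_3)=\tfrac12(e_1+e_3)$, so $\langle Ae_1,Be_1\rangle=\tfrac14\neq 1=\langle e_1,e_1\rangle$. Thus with the symmetric choice $A=P_{\mathcal{M}_G}F$, $B=P_{\mathcal{M}_F}G$ the identity $B^{*}A=1$ simply fails, and by your own formula $\langle F_1(w),G_1(z)\rangle=\langle Aw,Bz\rangle-\langle w,z\rangle$ the residuals are not orthogonal either; the subsequent placement of $G_1(z)$ in $\mathcal{M}_3$ also uses the same false inclusion.

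The paper avoids this by making the construction nested rather than symmetric: having built $A$ exactly as you do (projection of $F$ onto $\mathcal{N}=\mathcal{M}_G$), it sets $\mathcal{M}_1=\overline{\operatorname{Ran}(A)}\subseteq\mathcal{M}_G$ and defines $B$ as the linear extension of $P_{\mathcal{M}_1}G$, i.e.\ it projects $G$ onto the closure of the range of $A$, not onto $\mathcal{M}_F$. Then $Aw\in\mathcal{M}_1$ automatically, so $\langle Aw,Bz\rangle=\langle Aw,G(z)\rangle=\langle w,z\rangle$ gives $B^{*}A=1$ for free, and the decomposition $\mathcal{M}_2=\mathcal{M}\ominus\mathcal{M}_G$, $\mathcal{M}_3=\mathcal{M}_G\ominus\mathcal{M}_1$ works because $G$ already maps into $\mathcal{M}_G$, so $F=A+P_{\mathcal{M}_2}F$ and $G=B+P_{\mathcal{M}_3}G$ on $\mathcal{B}$. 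Your two specializations survive independently of the gap: in finite dimensions $A^{*}G(z)=z$ and $B^{*}F(w)=w$ force $\mathcal{M}_F=\mathcal{M}_G=\mathcal{M}$, whence the residuals vanish and $\langle Aw,Bz\rangle=\langle F(w),G(z)\rangle$ directly; and when $F=G$ your inclusion is trivially true since $\mathcal{M}_F=\mathcal{M}_G$. But the general decomposition asserted in the first paragraph of the lemma is not established by your argument as written; it needs the paper's asymmetric choice of $B$ (or an equivalent repair).
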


\begin{proof}
Let $\mathcal{N}$ be the closure of the linear span of $\{G(z):z\in\mathcal{B}\}$. Then we have $P_{\mathcal{N}}G = G$ (here $P_{\mathcal{N}}$ is the orthogonal projection from $\mathcal{M}$ onto $\mathcal{N}$) and for all $w,z\in\mathcal{B}$,
\begin{align*}
\langle P_{\mathcal{N}}F(w),G(z)\rangle = \langle F(w),G(z)\rangle = \langle w,z\rangle.
\end{align*}
For any $z,w_{1},w_{2}$ in $\mathcal{B}$ and complex numbers $c_1,c_2$ such that $c_1w_{1}+c_2w_{2}$ also belongs to $\mathcal{B}$, we have
\begin{align*}
& \Big\langle P_{\mathcal{N}}F(c_1w_{1}+c_2w_{2}) - c_1P_{\mathcal{N}}F(w_{1})-c_2P_{\mathcal{N}}F(w_{2}),G(z)\Big\rangle\\
&\quad\quad\quad = \langle c_1w_{1}+c_2w_{2},z\rangle - c_1\langle w_{1},z\rangle - c_2\langle w_{2},z\rangle = 0.
\end{align*}
Since the range of $P_{\mathcal{N}}F$ is contained in $\mathcal{N}$ and the linear span of the set $\{G(z): z\in\mathcal{B}\}$ is dense in $\mathcal{N}$, we conclude that $P_{\mathcal{N}}F(c_1w_{1}+c_2w_{2}) = c_1P_{\mathcal{N}}F(w_{1})+c_2P_{\mathcal{N}}F(w_{2})$. From this, it follows that $P_{\mathcal{N}}F$ extends to a linear operator on $\mathcal{M}$. We call this extension $A$ and denote the closure of its range by $\mathcal{M}_1$. So $A$ can be regarded as an operator from $\mathcal{M}$ into $\mathcal{M}_1$. We have $\langle Aw,G(z)\rangle = \langle w,z\rangle$ for all $w,z\in\mathcal{B}$. We claim that $A$ is a closed operator and hence by the Closed Graph Theorem, it is bounded. Suppose $\{w_m\}$ is a sequence in $\mathcal{M}$ such that $w_m\rightarrow 0$ and $Aw_m\rightarrow y$ as $m\rightarrow\infty$. For $z\in\mathcal{B}$,
\begin{align*}
0 & = \lim_{m\rightarrow\infty} \langle w_m,z\rangle = \lim_{m\rightarrow\infty}\langle Aw_m,G(z)\rangle = \langle y,G(z)\rangle.
\end{align*}
Since $y$ belongs to $\mathcal{M}_1\subset\mathcal{N}$ and the linear span of $\{G(z): z\in\mathcal{B}\}$ is dense in $\mathcal{N}$, we conclude that $y=0$. So $A$ is a closed operator.

Now for $w,z\in\mathcal{B}$, $\langle Aw,P_{\mathcal{M}_1}G(z)\rangle  = \langle Aw,G(z)\rangle = \langle w,z\rangle.$ It then follows, by the same argument as before, that $P_{\mathcal{M}_1}G$ extends to a bounded linear operator on $\mathcal{M}$. Call this operator $B$. Then the range of $B$ is contained in $\mathcal{M}_1$ (hence we may regard $B$ as an operator from $\mathcal{M}$ into $\mathcal{M}_1$) and we have $\langle Aw,Bz\rangle = \langle w,z\rangle$ for $w,z\in\mathcal{B}$. As before, $B$ can be shown to be a closed operator, hence it is bounded and we have $B^{*}A=1$.

Put $\mathcal{M}_2 = \mathcal{M}\ominus\mathcal{N}$ and $\mathcal{M}_3=\mathcal{N}\ominus\mathcal{M}_1$. Put $F_1=P_{\mathcal{M}_2}F$ and $G_1=P_{\mathcal{M}_3}G$. We then have, on $\mathcal{B}$,
\begin{align*}
F & = P_{\mathcal{N}}F + P_{\mathcal{M}_2}F = A+F_1,\\
G & = P_{\mathcal{N}}G = P_{\mathcal{N}}P_{\mathcal{M}_1}G + P_{\mathcal{N}}(I-P_{\mathcal{M}_1})G = P_{\mathcal{M}_1}G+P_{\mathcal{M}_3}G = B+G_1.
\end{align*}

If $\mathcal{M}$ is a finite dimensional space, then it follows from $B^{*}A=1$ that both $A$ and $B$ are invertible operators from $\mathcal{M}$ onto $\mathcal{M}_1$. Therefore, $\mathcal{M}_1=\mathcal{M}$, which forces $\mathcal{M}_2=\mathcal{M}_3=\{0\}$. So $F(w)=Aw$ and $G(z)=Bz=(A^{*})^{-1}z$ for $w,z\in\mathcal{B}$. If both $F$ and $G$ maps $\mathcal{B}$ into itself, then $\|A\|\leq 1$ and $\|(A^{*})^{-1}\|\leq 1$. Consequently, both $A$ and $A^{-1}$ are contractive operators on $\mathcal{M}$. This forces $A$ to be unitary.

If $F=G$ then we have $F_1=G_1=0$ and $A=B$. But $B^{*}A=1$, so $A^{*}A=1$ and hence $A$ is an isometry on $\mathcal{M}$.
\end{proof}

\section{Normal weighted composition operators}

Recall that for $V$ a linear operator on $\mathbb{C}^n$ with $\|V\|\leq 1$, we denote by $C_{V}$ the composition operator induced by the analytic self-map $\psi_{V}(z)=Vz$ of $\mathbb{B}_n$. If $V$ is normal, then since $C_{V}C_{V}^{*} = C_{V^{*}V} = C_{VV^{*}}=C_{V}^{*}C_{V}$, the operator $C_{V}$ is normal on $H_{\gamma}$. It turns out that these are all normal composition operators on $H_{\gamma}$ for each $\gamma>0$. The following result is part of \cite[Theorem 8.2]{CowenCRCP1995}.

\begin{proposition}\label{P:normal_COs}
Let $\gamma>0$ and let $\varphi$ be an analytic mapping of $\mathbb{B}_n$ into itself. Then $C_{\varphi}$ is normal on $H_{\gamma}$ if and only if $\varphi(z)=Az$ for some normal linear operator $A$ on $\mathbb{C}^n$ with $\|A\|\leq 1$.
\end{proposition}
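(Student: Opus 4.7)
My plan is as follows. The \emph{if} direction, namely that $C_A$ is normal for every normal linear operator $A$ on $\mathbb{C}^n$ with $\|A\|\le 1$, is the short computation $C_A C_A^* = C_{A^*A} = C_{AA^*} = C_A^* C_A$ already given in the paragraph preceding the statement, so I will focus on the converse. Assume $C_\varphi$ is normal on $H_\gamma$. First, applying $\|C_\varphi h\| = \|C_\varphi^* h\|$ (which holds for any normal operator) to $h = K_0^\gamma = 1$ and invoking \eqref{Eqn:adjoint_WCO} with $f\equiv 1$ yields $1 = \|K_{\varphi(0)}^\gamma\|^2 = (1-|\varphi(0)|^2)^{-\gamma}$, so $\varphi(0) = 0$.

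Set $A = \varphi'(0)$. My strategy is to apply the commutation $C_\varphi^* C_\varphi = C_\varphi C_\varphi^*$ to the coordinate functions $z_j$ and to match Taylor coefficients. For this I first compute $C_\varphi^* z_j$. From the reproducing-kernel identity $(C_\varphi^* z_j)(w) = \langle z_j, K_w^\gamma \circ \varphi\rangle$ and the expansion $K_w^\gamma \circ \varphi = \sum_\beta \frac{(\gamma)_{|\beta|}}{\beta!}\varphi^\beta\, \bar w^\beta$, the condition $\varphi(0)=0$ forces each $\varphi^\beta$ to have Taylor order at least $|\beta|$; orthogonality of monomials in $H_\gamma$ then isolates only the $|\beta|=1$ contributions and gives $C_\varphi^* z_j = (A^*w)_j$. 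Substituting into $C_\varphi^* C_\varphi z_j = C_\varphi C_\varphi^* z_j$ produces the functional identity
\begin{equation*}
C_\varphi^* \varphi_j = (A^*\varphi)_j, \qquad j=1,\ldots,n.
\end{equation*}

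The main obstacle is extracting linearity of $\varphi$ from this identity. I plan to expand $\varphi_j = \sum_\alpha b^{(j)}_\alpha z^\alpha$ (so $b^{(j)}_{e_k} = A_{jk}$) and use the same power-series machinery to compute the $w_k$-coefficient of $C_\varphi^* z^\alpha$ for every $\alpha$. A short calculation shows that the $w_k$-coefficient of $C_\varphi^* \varphi_j$ equals $\gamma \sum_\alpha \frac{\alpha!}{(\gamma)_{|\alpha|}}\, b^{(j)}_\alpha \overline{b^{(k)}_\alpha}$, while the corresponding coefficient of $(A^*\varphi)_j$ is $(A^*A)_{jk}$. Splitting off the $|\alpha|=1$ piece, which contributes $(AA^*)_{jk}$, the identity becomes
\begin{equation*}
(AA^*)_{jk} + \gamma \sum_{|\alpha|\ge 2} \frac{\alpha!}{(\gamma)_{|\alpha|}}\, b^{(j)}_\alpha \overline{b^{(k)}_\alpha} = (A^*A)_{jk}.
\end{equation*}
Setting $j=k$, summing over $j$, and using $\mathrm{tr}(A^*A) = \mathrm{tr}(AA^*)$ annihilates the matrix contribution and leaves a nonnegative sum of $|b^{(j)}_\alpha|^2$ equal to zero. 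Hence $b^{(j)}_\alpha = 0$ for every $|\alpha|\ge 2$, and $\varphi(z) = Az$.

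Finally, with $\varphi(z) = Az$ the self-map condition yields $\|A\|\le 1$ and $C_A^* = C_{A^*}$. By \eqref{Eqn:multiplication_WCOs}, $C_A^* C_A = C_{AA^*}$ and $C_A C_A^* = C_{A^*A}$, so normality reads $C_{AA^*} = C_{A^*A}$; the uniqueness of the symbol of a nonzero composition operator, established at the end of Section \ref{S:bounded_WCOs}, then forces $A^*A = AA^*$. So $A$ is normal, completing the proof.
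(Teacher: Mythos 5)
Your proof is correct, but it does not follow the paper's route, because the paper gives no proof of this proposition: it verifies only the easy direction in the preceding paragraph ($C_VC_V^{*}=C_{V^{*}V}=C_{VV^{*}}=C_V^{*}C_V$ for normal $V$) and attributes the hard direction to Theorem 8.2 of Cowen--MacCluer's book. What you supply is a self-contained argument built entirely from ingredients already in the paper: formula \eqref{Eqn:adjoint_WCO} applied to $K^{\gamma}_0$ gives $\varphi(0)=0$; orthogonality of the monomials with $\|z^m\|^2=m!/(\gamma)_{|m|}$ gives $C_\varphi^{*}z_j=(A^{*}w)_j$ for $A=\varphi'(0)$; and comparing the degree-one Taylor coefficients of $C_\varphi^{*}\varphi_j=(A^{*}\varphi)_j$ yields $(AA^{*})_{jk}+\gamma\sum_{|\alpha|\ge 2}\frac{\alpha!}{(\gamma)_{|\alpha|}}b^{(j)}_\alpha\overline{b^{(k)}_\alpha}=(A^{*}A)_{jk}$, after which the trace identity $\mathrm{tr}(AA^{*})=\mathrm{tr}(A^{*}A)$ annihilates a nonnegative sum and forces $\varphi(z)=Az$; I checked the numerical factors (e.g.\ $\|z_j\|^2=1/\gamma$) and the composition orders in $C_A^{*}C_A=C_{AA^{*}}$, $C_AC_A^{*}=C_{A^{*}A}$, and they are right. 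Two small remarks: your last paragraph is redundant, since once the coefficients with $|\alpha|\ge 2$ vanish your displayed identity already reads $(AA^{*})_{jk}=(A^{*}A)_{jk}$, so $A$ is normal without appealing to \eqref{Eqn:multiplication_WCOs} and symbol uniqueness (though that detour is also valid); and it is worth saying explicitly that normality presupposes boundedness of $C_\varphi$, which is what puts $\varphi_j=C_\varphi z_j$ in $H_\gamma$ and justifies the convergence of your coefficient sums. The payoff of your approach is a short proof valid verbatim for every $\gamma>0$, including the Drury--Arveson range, without importing the weighted Hardy space machinery of the cited theorem.
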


The spectrum of a normal composition operator can be determined easily. Let $A$ be a normal linear operator on $\mathbb{C}^n$ with $\|A\|\leq 1$, we will identify the eigenvalues and eigenvectors of $C_{A}$ on $H_{\gamma}$. We will show that $C_{A}$ is diagonalizable and hence its spectrum is the closure of the set of its eigenvalues.

Since $A$ is normal, there is an orthonormal basis $\{u_1,\ldots, u_n\}$ of $\mathbb{C}^n$ which consists of eigenvectors of $A$. Write $Au_j = \lambda_ju_j$, where $\lambda_j$ is the eigenvalue corresponding to $u_j$ for $1\leq j\leq n$ (note that some of these eigenvalues may be the same). Then the spectrum of $A$ is given by $\sigma(A)=\{\lambda_1,\ldots,\lambda_n\}$. Let $\{e_1,\ldots, e_n\}$ be the standard orthonormal basis for $\mathbb{C}^n$ and let $V$ be the unitary operator on $\mathbb{C}^n$ such that $Vu_j=e_j$ for $1\leq j\leq n$. For any $z=(z_1,\ldots,z_n)$ in $\mathbb{C}^n$, we have
\begin{equation}\label{Eqn:diagonal_matrix}
VAV^{*}(z) = (\lambda_1 z_1,\ldots, \lambda_n z_n).
\end{equation}

Recall from the Introduction that for any $\gamma>0$, the set of analytic monomials $\{z^{m} = z_1^{m_1}\cdots z_n^{m_n}: m=(m_1,\ldots,m_n)\in\mathbb{N}_{0}^n\}$ is a complete orthogonal set in $H_{\gamma}$. By \eqref{Eqn:diagonal_matrix}, we have $C_{VAV^{*}}(z^{m}) = \lambda^{m}z^{m}$ for all $m\in\mathbb{N}_0^n$ (here we write $\lambda^m = \lambda_1^{m_1}\cdots\lambda_n^{m_n}$ and use the convention that $0^{0}=1$). Since $C_{VAV^{*}}=C_{V^{*}}C_{A}C_{V}$ and $C_{V}$ is unitary with $C_{V}^{*} = C_{V^{*}}$ (by Corollary \ref{C:unitary_linear}), we conclude that the set $\{C_{V}z^{m}: m\in\mathbb{N}_0^n\}$ is a complete orthogonal set in $H_{\gamma}$ and for each $m\in\mathbb{N}_0^n$, the function $C_{V}z^{m}$ is an eigenfunction for $C_{A}$ with eigenvalue $\lambda^{m}$. Thus the operator $C_{A}$ is diagonalizable in $H_{\gamma}$ and the spectrum $\sigma(C_{A})$ is the closure of the set $\{\lambda^{m}: m\in\mathbb{N}_0^n\}$.

The eigenfunctions $C_{V}(z^m)$ of $C_{A}$ can be described in terms of the eigenvectors of $A$ as follows. 
\begin{align*}
C_{V}(z^m) & = C_{V}(z_1^{m_1}\cdots z_n^{m_n}) = C_{V}(\langle z,e_1\rangle^{m_1}\cdots\langle z,e_n\rangle^{m_n})\\
& = \langle Vz,e_1\rangle^{m_1}\cdots\langle Vz,e_n\rangle^{m_n} = \langle z,V^{*}e_1\rangle^{m_1}\cdots\langle z,V^{*}e_n\rangle^{m_n}\\
& = \langle z,u_1\rangle^{m_1}\cdots\langle z,u_n\rangle^{m_n}.
\end{align*}
We have thus obtained
\begin{proposition}\label{P:spectrum_normal_COs}
Let $A$ be a normal operator on $\mathbb{C}^n$ with $\|A\|\leq 1$. Let $\{u_1,\ldots, u_n\}$ be an orthonormal basis for $\mathbb{C}^n$ consisting of eigenvectors of $A$. Write $Au_j=\lambda_j u_j$ for $1\leq j\leq n$. Then the following statements hold.
\begin{itemize}
\item[(a)] The set $\{f_{m}(z)=\langle z,u_1\rangle^{m_1}\cdots\langle z,u_n\rangle^{m_n}: m=(m_1,\ldots, m_n)\in\mathbb{N}_0^n\}$ is a complete orthogonal set of $H_{\gamma}$.
\item[(b)] Each $f_m$ is an eigenfunction of $C_{A}$ with eigenvalue $\lambda^{m} = \lambda_1^{m_1}\cdots\lambda_n^{m_n}$.
\item[(c)] The spectrum of $C_{A}$ is the closure of the set $\{\lambda^{m}: m\in\mathbb{N}_0^n\}$, where $\lambda=(\lambda_1,\ldots,\lambda_n)$. This set can also be written as $\{1\}\cup\{\alpha_1\cdots\alpha_s: \alpha_j\in\sigma(A)\text{ for } 1\leq j\leq s\text{ and } s=1,2,\ldots\}$.
\end{itemize}
\end{proposition}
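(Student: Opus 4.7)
My plan is to diagonalize $A$ on $\mathbb{C}^n$ via the spectral theorem, conjugate $C_A$ by a unitary composition operator so that it becomes the composition operator induced by a diagonal matrix, and then read off the eigenfunctions and the spectrum using the fact that the monomials form a complete orthogonal system in $H_{\gamma}$.

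First I invoke the spectral theorem to produce the orthonormal eigenbasis $\{u_1,\ldots,u_n\}$ with $Au_j = \lambda_j u_j$ and define the unitary operator $V$ on $\mathbb{C}^n$ by $V u_j = e_j$. Then $D := VAV^{*}$ acts diagonally as $D(z_1,\ldots,z_n) = (\lambda_1 z_1, \ldots, \lambda_n z_n)$. By Corollary \ref{C:unitary_linear}, $C_V$ is unitary on $H_{\gamma}$ with adjoint $C_{V^{*}}$, so $C_{VAV^{*}} = C_{V^{*}} C_A C_V$ gives $C_A = C_V C_D C_{V^{*}}$, reducing the analysis to $C_D$.

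A direct computation yields $C_D(z^m) = \lambda^m z^m$ for every multi-index $m \in \mathbb{N}_0^n$, where $\lambda^m = \lambda_1^{m_1}\cdots \lambda_n^{m_n}$. Since $\{z^m : m \in \mathbb{N}_0^n\}$ is a complete orthogonal set in $H_{\gamma}$ (stated in the Introduction), this exhibits $C_D$ as a diagonal operator, so its point spectrum is $\{\lambda^m\}$ and its full spectrum is the closure of that set. Pulling back through the unitary $C_V$ shows that the functions $f_m := C_V(z^m)$ form a complete orthogonal set of eigenfunctions for $C_A$ with eigenvalues $\lambda^m$, establishing (a) and (b); the explicit formula for $f_m$ in (a) drops out by writing $z^m = \langle z,e_1\rangle^{m_1}\cdots\langle z,e_n\rangle^{m_n}$ and using $\langle Vz,e_j\rangle = \langle z, V^{*}e_j\rangle = \langle z,u_j\rangle$.

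For the second description of $\sigma(C_A)$ in (c), I observe that $\lambda^m$ with $|m|=0$ equals $1$, while for $|m| = s \geq 1$ the value $\lambda^m$ is a product of $s$ elements of $\sigma(A) = \{\lambda_1,\ldots,\lambda_n\}$ (with repetition), and conversely every such product equals $\lambda^m$ for some multi-index of length $s$. I do not anticipate a genuine obstacle here: the entire argument is a routine spectral diagonalization, leveraging the already-established unitarity of $C_V$ from Corollary \ref{C:unitary_linear} together with the standard orthogonal monomial basis of $H_{\gamma}$.
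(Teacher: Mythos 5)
Your proposal is correct and follows essentially the same route as the paper: diagonalize $A$ via the unitary $V$ with $Vu_j=e_j$, use the unitarity of $C_V$ (Corollary \ref{C:unitary_linear}) to conjugate $C_A$ to the diagonal composition operator $C_{VAV^{*}}$ acting on the monomial basis, and then compute $C_V(z^m)=\langle z,u_1\rangle^{m_1}\cdots\langle z,u_n\rangle^{m_n}$ to obtain the eigenfunctions, eigenvalues, and the closure description of the spectrum. The paper's argument is identical in all essentials, including the final rewriting of $\{\lambda^m\}$ as products of elements of $\sigma(A)$.
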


In \cite{BourdonJMAA2010}, Bourdon and Narayan study normal weighted composition operators on the Hardy space of the unit disk. They provide two necessary conditions for $W_{f,\varphi}$ to be normal \cite[Lemma 2 and Proposition 3]{BourdonJMAA2010}: (1) either $f\equiv 0$ or $f$ never vanishes, and (2) if $\varphi$ is not a constant function and $f$ is not the zero function, then $\varphi$ is univalent. While condition (1) is still valid in all dimensions with the same proof, condition (2) no longer holds in dimension greater than one, as Proposition \ref{P:normal_COs} shows. On the other hand, we will see that the characterization of normal $W_{f,\varphi}$ on $H_{\gamma}$ remains the same if the map $\varphi$ fixes a point in the unit ball. Our approach here was inspired by that in \cite{BourdonJMAA2010} but the argument has been simplified. Furthermore, our proof works for all $H_{\gamma}$ in any dimension.

\begin{theorem}\label{T:normal_WCOs}
Suppose $\varphi$ is an analytic self-map of $\mathbb{B}_n$ that fixes a point $p$ in $\mathbb{B}_n$. If $W_{f,\varphi}$ is a non-zero normal operator, then there exist a constant $\alpha\neq 0$ and a normal linear operator $A$ on $\mathbb{C}^n$ with $\|A\|\leq 1$ such that 
\begin{equation}\label{Eqn:normal_WCOs}
f = \alpha\frac{k^{\gamma}_p}{k^{\gamma}_p\circ\varphi},\quad\text{ and }\quad \varphi(z) = \varphi_p(A\varphi_p(z)) \text{ for } z\in\mathbb{B}_n.
\end{equation}
Conversely, if $f$ and $\varphi$ satisfy \eqref{Eqn:normal_WCOs}, then $\alpha = f(p)$ and $W_{f,\varphi}$ is unitarily equivalent to the normal operator $f(p) C_{A}$ (in fact, $W_{f,\varphi}=U_{p}\big(f(p)C_{A}\big)U_{p}$) and hence it is normal.
\end{theorem}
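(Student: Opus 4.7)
The plan is to reduce the problem to the case where $\varphi$ fixes the origin, and then invoke Proposition~\ref{P:normal_COs} on normal composition operators. The reduction uses the self-adjoint unitary $U_p = W_{k^{\gamma}_p, \varphi_p}$ from Corollary~\ref{C:unitary_involution}; since $U_p^2 = I$, conjugation $W \mapsto U_p W U_p$ preserves normality. Applying the multiplication formula~\eqref{Eqn:multiplication_WCOs} twice one computes
\begin{equation*}
U_p W_{f,\varphi} U_p = W_{\tilde{f}, \tilde{\varphi}}, \quad \tilde{\varphi} = \varphi_p\circ\varphi\circ\varphi_p, \quad \tilde{f} = k^{\gamma}_p \cdot (f\circ\varphi_p) \cdot (k^{\gamma}_p\circ\varphi\circ\varphi_p),
\end{equation*}
and the hypothesis $\varphi(p)=p$ combined with $\varphi_p(p)=0$ gives $\tilde{\varphi}(0)=0$.

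The key step in the forward direction is to show that when $\tilde{\varphi}(0)=0$, normality of $\tilde{W} := W_{\tilde{f},\tilde{\varphi}}$ forces $\tilde{f}$ to be constant. On one hand $\tilde{W}(1) = \tilde{f}$; on the other, formula~\eqref{Eqn:adjoint_WCO} together with $K^{\gamma}_0 \equiv 1$ gives $\tilde{W}^{*}(1) = \overline{\tilde{f}(0)}\cdot 1$. Normality yields $\|\tilde{f}\| = \|\tilde{W}(1)\| = \|\tilde{W}^{*}(1)\| = |\tilde{f}(0)|$, and since $\tilde{f}(0) = \langle \tilde{f},K^{\gamma}_0\rangle$ with $\|K^{\gamma}_0\|=1$, the equality case of Cauchy--Schwarz forces $\tilde{f}\equiv \alpha$ for some constant $\alpha$. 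This $\alpha$ is nonzero because $W_{f,\varphi}\neq 0$, so $\tilde{W} = \alpha C_{\tilde{\varphi}}$ and $C_{\tilde{\varphi}}$ is normal. Proposition~\ref{P:normal_COs} then produces a normal $A$ on $\mathbb{C}^n$ with $\|A\|\leq 1$ and $\tilde{\varphi}(z) = Az$; inverting the conjugation relation gives $\varphi(z) = \varphi_p(A\varphi_p(z))$. To recover $f$, I would substitute $z = \varphi_p(w)$ in the identity $\tilde{f}\equiv\alpha$ and apply~\eqref{Eqn:identity_auto} with $\psi = \varphi_p$ (where $a=b=p$), which yields $k^{\gamma}_p(\varphi_p(w))\cdot k^{\gamma}_p(w) = 1$ and hence $f = \alpha\, k^{\gamma}_p/(k^{\gamma}_p\circ\varphi)$; evaluating at $z=p$ shows $\alpha = f(p)$.

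For the converse, given $f$ and $\varphi$ as in~\eqref{Eqn:normal_WCOs}, I would compute $U_p\bigl(f(p)C_A\bigr)U_p$ directly via~\eqref{Eqn:multiplication_WCOs}, obtaining a weighted composition operator with map $\varphi_p\circ\psi_A\circ\varphi_p = \varphi$ and weight $f(p)\cdot k^{\gamma}_p(z)\cdot k^{\gamma}_p(A\varphi_p(z))$. A second application of~\eqref{Eqn:identity_auto}, now at the point $A\varphi_p(z)$, rewrites $k^{\gamma}_p(A\varphi_p(z)) = 1/k^{\gamma}_p(\varphi_p(A\varphi_p(z))) = 1/k^{\gamma}_p(\varphi(z))$, so the weight equals $f$ and therefore $U_p(f(p)C_A)U_p = W_{f,\varphi}$; since $f(p)C_A$ is normal, so is $W_{f,\varphi}$, and the two are unitarily equivalent via $U_p$. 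The conceptual crux is the Cauchy--Schwarz equality argument that strips off the weight; everything else is algebraic bookkeeping with the $\varphi_p$ and $k^{\gamma}_p$ identities.
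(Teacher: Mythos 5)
Your proof is correct and follows essentially the same route as the paper: conjugate by the self-adjoint unitary $U_p$ to reduce to the case $\tilde{\varphi}(0)=0$, show the transformed weight is constant, invoke Proposition \ref{P:normal_COs}, and undo the conjugation using $k^{\gamma}_p\cdot(k^{\gamma}_p\circ\varphi_p)=1$, with the converse by direct computation of $U_p\big(f(p)C_A\big)U_p$. The only (minor) variation is the constancy step: the paper observes that $K^{\gamma}_0$ is an eigenvector of $W^{*}_{\tilde{f},\tilde{\varphi}}$ and uses that a normal operator shares eigenvectors with its adjoint, whereas you use $\|\tilde{W}1\|=\|\tilde{W}^{*}1\|$ together with the equality case of Cauchy--Schwarz; both arguments rest on the same normality identity and are equally valid.
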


\begin{proof}
We assume first $\varphi(0)=0$ and $W_{f,\varphi}$ is a non-zero normal operator. By \eqref{Eqn:adjoint_WCO}, we have $$W^{*}_{f,\varphi}K^{\gamma}_0 = \overline{f(0)}K^{\gamma}_{\varphi(0)} = \overline{f(0)}K^{\gamma}_{0}.$$
This shows that $K^{\gamma}_{0}$ is an eigenvector of $W_{f,\varphi}^{*}$ with eigenvalue $\overline{f(0)}$. Since $W_{f,\varphi}$ is normal, we obtain $W_{f,\varphi}K^{\gamma}_{0} = f(0)K^{\gamma}_0$, which implies $f\cdot K^{\gamma}_{0}\circ\varphi = f(0)K^{\gamma}_{0}$ and hence $f = f(0)$ since $K^{\gamma}_0\equiv 1$.  So $f$ is a constant function (which is non-zero because $W_{f,\varphi}$ is a non-zero operator). This in turns implies that $C_{\varphi}$ is normal on $H_{\gamma}$. By Proposition \ref{P:normal_COs}, there is a normal linear operator on $\mathbb{B}_n$ with $\|A\|\leq 1$ such that $\varphi(z)=Az$ for $z\in\mathbb{B}_n$.

For general $p$, define $\widetilde{f}=(k^{\gamma}_p\circ\varphi\circ\varphi_p)(f\circ\varphi_p)k^{\gamma}_p$ and $\widetilde{\varphi}=\varphi_p\circ\varphi\circ\varphi_p$. By \eqref{Eqn:multiplication_WCOs}, $U_pW_{f,\varphi}U_p = W_{\widetilde{f},\widetilde{\varphi}}$. Since $W_{f,\varphi}$ and $W_{\widetilde{f},\widetilde{\varphi}}$ are unitarily equivalent (recall that $U_p$ is a self-adjoint unitary operator), one is normal if and only if the other is normal. Since $\widetilde{\varphi}(0)=0$, the above argument shows that $W_{\widetilde{f},\widetilde{\varphi}}$ is normal if and only if $\widetilde{f}$ is a constant function, say, $\widetilde{f}\equiv\alpha$ and $\widetilde{\varphi}(z)=Az$ for some normal operator $A$ on $\mathbb{C}^n$ with $\|A\|\leq 1$. Thus we obtain $(k^{\gamma}_p\circ\varphi\circ\varphi_p)(f\circ\varphi_p)k^{\gamma}_p\equiv\alpha$ and $\varphi_p\circ\varphi\circ\varphi_p(z) = Az$. Using the fact that $\varphi_p\circ \varphi_p$ is the identity map on $\mathbb{B}_n$, we get
\begin{equation}\label{Eqn:second_normal_WCOs}
f = \frac{\alpha}{(k^{\gamma}_p\circ\varphi)(k^{\gamma}_p\circ\varphi_p)},\quad\text{ and }\quad \varphi(z) = \varphi_p( A\varphi_p(z))\text{ for } z\in\mathbb{B}_n.
\end{equation}
On the other hand, since $\varphi_p(0)=p=\varphi_p^{-1}(0)$, \eqref{Eqn:identity_auto} gives $k^{\gamma}_p\cdot (k^{\gamma}_p\circ\varphi_{p}) = 1$. Therefore $f$ can be written as $f=\alpha\,\frac{k^{\gamma}_p}{k^{\gamma}_p\circ\varphi}.$ Since $k^{\gamma}_p(\varphi(p))=k^{\gamma}_p(p)$, we see that $f(p)=\alpha$.

Conversely, if $f$ and $\varphi$ satisfy \eqref{Eqn:normal_WCOs}, then they satisfy \eqref{Eqn:second_normal_WCOs} (with $\alpha = f(p)$) and hence $W_{f,\varphi}$ is unitarily equivalent to the normal operator $f(p) C_{A}$. 
\end{proof}

We now use Theorem \ref{T:normal_WCOs} and Proposition \ref{P:spectrum_normal_COs} to discuss the spectra of normal weighted composition operators. Suppose that $f$ and $\varphi$ satisfy \eqref{Eqn:normal_WCOs}. Let $\{u_1,\ldots,u_n\}$ be an orthonormal basis for $\mathbb{C}^n$ consisting of eigenvectors of $A$. Let $\lambda_j$ be the eigenvalue of $A$ corresponding to the eigenvector $u_j$ and put $f_j(z)=\langle z,u_j\rangle$ for $1\leq j\leq n$. For each multi-index $m=(m_1,\ldots,m_n)$ in $\mathbb{N}_0^n$, we write $f_m = f_1^{m_1}\cdots f_n^{m_n}$. From Proposition \ref{P:spectrum_normal_COs} we know that $\{f_m: m\in\mathbb{N}_0^n\}$ is a complete orthogonal set of $H_{\gamma}$ and $C_{A}(f_m) = \lambda^{m} f_m$ for each $m$, where $\lambda=(\lambda_1,\ldots,\lambda_n)$.

For $1\leq j\leq n$, put
\begin{align*}
g_j(z) & = (U_{p}f_j)(z) = k^{\gamma}_{p}(z)f_j(\varphi_p(z)) = k^{\gamma}_{p}(z)\langle\varphi_p(z),u_j\rangle.
\end{align*}
Put $g_m = g_1^{m_1}\cdots g_n^{m_n} = U_{p}(f_m)$ for $m=(m_1,\ldots,m_n)\in\mathbb{N}_0^n$. Since $U_{p}$ is unitary on $H_{\gamma}$, the set $\{g_m: m\in\mathbb{N}_0^n\}$ is a complete orthogonal set of $H_{\gamma}$. Since $W_{f,\varphi} = U_{p}(f(p)C_{A})U_{p}$ by Theorem \ref{T:normal_WCOs} and $U_p=U^{-1}_p$, we conclude that $W_{f,\varphi}g_m = f(p)\lambda^{m}g_m$ for $m\in\mathbb{N}_0^n$. Therefore the spectrum of $W_{f,\varphi}$ is the closure of the set $\{f(p)\lambda^{m}: m\in\mathbb{N}_0^n\}$, which is the same as
\begin{align*}
\{f(p)\}\cup\{f(p)\cdot\alpha_1\cdots\alpha_s: \alpha_j\in\sigma(A)\text{ for } 1\leq j\leq s \text{ and } s=1,2,\ldots\}.
\end{align*}

On the other hand, by the chain rule, we have
\begin{align*}
\varphi'(p)=\varphi_p'(A\varphi_p(0))A\varphi_p'(p) = \varphi_p'(0)A\varphi_p'(p).
\end{align*}
Since $\varphi_p\circ\varphi_p=I_{\mathbb{B}_n}, \varphi_p(0)=p$ and $\varphi_p(p)=0$, the chain rule again gives $\varphi_p'(p)\varphi_p'(0)=\varphi'_p(0)\varphi_p'(p)=I_n$, the identity operator on $\mathbb{C}^n$. Therefore $\varphi'(p)$ and $A$ are similar and hence they have the same set of eigenvalues, counting multiplicities. In particular, $\sigma(A)=\sigma(\varphi'(p))$. We thus obtain the description of the spectrum of $W_{f,\varphi}$ intrinsically in terms of $f$ and $\varphi$.

\begin{proposition}\label{P:spectrum_normal_WCOs}
Let $f$ be a non-zero analytic function and $\varphi$ an analytic self-map of $\mathbb{B}_n$ that fixes a point $p$ on $\mathbb{B}_n$. Suppose $W_{f,\varphi}$ is a normal operator on $H_{\gamma}$. Then the spectrum of $W_{f,\varphi}$ is the closure of the set $$\{f(p)\}\cup\{f(p)\cdot\alpha_1\cdots\alpha_s: \alpha_j\in\sigma(\varphi'(p))\text{ for } 1\leq j\leq s \text{ and } s=1,2,\ldots\}.$$
\end{proposition}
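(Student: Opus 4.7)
The plan is to reduce the computation of $\sigma(W_{f,\varphi})$ to that of a normal composition operator by invoking Theorem \ref{T:normal_WCOs}. Since $W_{f,\varphi}$ is a non-zero normal operator and $\varphi$ fixes $p$, the theorem produces a normal linear operator $A$ on $\mathbb{C}^n$ with $\|A\|\le 1$ such that $W_{f,\varphi}=U_p(f(p)\,C_A)U_p$. Because $U_p$ is a self-adjoint unitary on $H_\gamma$ (Corollary \ref{C:unitary_involution}), this exhibits $W_{f,\varphi}$ as unitarily equivalent to $f(p)\,C_A$, and therefore the two operators have the same spectrum.

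Next I would apply Proposition \ref{P:spectrum_normal_COs} to describe $\sigma(C_A)$ as the closure of $\{\lambda^m : m\in\mathbb{N}_0^n\}$, where $\lambda_1,\ldots,\lambda_n$ are the eigenvalues of $A$. Multiplying by the scalar $f(p)$ and taking closure, $\sigma(W_{f,\varphi})$ equals the closure of
\[
\{f(p)\}\cup\{f(p)\alpha_1\cdots\alpha_s : \alpha_j\in\sigma(A),\ s\ge 1\}.
\]

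The only step with any content is identifying $\sigma(A)$ with $\sigma(\varphi'(p))$, so that the description is intrinsic to $\varphi$. From $\varphi(z)=\varphi_p(A\varphi_p(z))$ and the fact that $\varphi_p(p)=0$, the chain rule gives
\[
\varphi'(p)=\varphi_p'(A\varphi_p(p))\,A\,\varphi_p'(p)=\varphi_p'(0)\,A\,\varphi_p'(p).
\]
Differentiating the identity $\varphi_p\circ\varphi_p=\mathrm{id}_{\mathbb{B}_n}$ at $0$ and at $p$ shows that $\varphi_p'(0)\varphi_p'(p)=\varphi_p'(p)\varphi_p'(0)=I_n$, so $\varphi_p'(0)=\varphi_p'(p)^{-1}$. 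Consequently $\varphi'(p)=\varphi_p'(p)^{-1}A\,\varphi_p'(p)$, making $\varphi'(p)$ similar to $A$. In particular $\sigma(\varphi'(p))=\sigma(A)$, and substituting this into the displayed set finishes the proof.

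I do not expect a genuine obstacle here: the structural content is already packaged into Theorem \ref{T:normal_WCOs} and Proposition \ref{P:spectrum_normal_COs}. The only point requiring care is the similarity argument via the chain rule, together with the reminder that, since $C_A$ is diagonalizable with infinitely many eigenvalues in general, one must take closure when reading off the spectrum.
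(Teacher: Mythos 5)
Your proposal is correct and follows essentially the same route as the paper: invoke Theorem \ref{T:normal_WCOs} to write $W_{f,\varphi}=U_p\bigl(f(p)C_A\bigr)U_p$, use Proposition \ref{P:spectrum_normal_COs} for $\sigma(C_A)$, and identify $\sigma(A)=\sigma(\varphi'(p))$ by the chain-rule similarity argument. The only cosmetic difference is that the paper exhibits the explicit complete orthogonal set of eigenfunctions $g_m=U_p f_m$ rather than citing unitary equivalence directly, which changes nothing of substance.
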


We have characterized normal weighted composition operators induced by analytic self-maps of $\mathbb{B}_n$ that fix a point in $\mathbb{B}_n$. Our approach (conjugating $W_{f,\varphi}$ by a unitary) does not seem to work for $\varphi$ that only has fixed point on the sphere. In the rest of this section, we investigate normal weighted composition operators of a certain type.

In \cite[Section 5]{BourdonJMAA2010}, Bourdon and Narayan note that in one dimension, the function $f$ in the conclusion of Theorem \ref{T:normal_WCOs} is in fact a constant multiple of $K^{\gamma}_{\sigma(0)}$, where $\sigma$ is the adjoint of the linear fractional map $\varphi$. They then go on to find necessary and sufficient conditions for the normality of $W_{f,\varphi}$, where $\varphi$ is a linear fractional map and $f=K^{\gamma}_{\sigma(0)}$. It turns out that in higher dimensions similar results also hold but they are less obvious because of the complicated settings of several variables.

Recall that a linear fractional map $\varphi$ has the form $\varphi(z) = \frac{Az+B}{\langle z,C\rangle +d}$, where $A$ is a linear operator on $\mathbb{C}^n$; $B, C$ are vectors in $\mathbb{C}^n$; and $d$ is a complex number. Given such a map $\varphi$, its adjoint is defined by $$\sigma(z)=\sigma_{\varphi}(z) = \frac{A^{*}z-C}{-\langle z,B\rangle+\overline{d}}.$$
For more details on $\sigma$ and its relation with $\varphi$, see \cite{CowenSzeged2000}.

We begin by a lemma that can be verified by a direct computation, using the formulas of $\varphi, \sigma$ and of the reproducing kernel functions.
\begin{lemma}\label{L:kernel_identity_adjoint} Let $\varphi$ be a linear fractional self-map of $\mathbb{B}_n$ and let $\sigma$ be its adjoint. Then for any point $a$ in $\mathbb{B}_n$, we have
\begin{equation*}
K^{\gamma}_{\varphi(0)}\cdot K^{\gamma}_{a}\circ\sigma = \overline{K}^{\gamma}_{\sigma(0)}(a)K^{\gamma}_{\varphi(a)}\quad\text{ and }\quad K^{\gamma}_{\sigma(0)}\cdot K^{\gamma}_{a}\circ\varphi  = \overline{K}^{\gamma}_{\varphi(0)}(a)K^{\gamma}_{\sigma(a)}.
\end{equation*}
\end{lemma}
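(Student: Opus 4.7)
The plan is to verify each identity by direct computation, plugging the explicit formulas for $\varphi$, $\sigma$, and the kernel functions into both sides and showing they reduce to the same rational expression.

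First I would record the values $\varphi(0) = B/d$ and $\sigma(0) = -C/\overline{d}$, along with
\begin{equation*}
1-\langle z,\varphi(0)\rangle = \tfrac{1}{\overline{d}}\bigl(\overline{d}-\langle z,B\rangle\bigr),
\quad\ 1-\langle\sigma(z),a\rangle = \tfrac{1}{\overline{d}-\langle z,B\rangle}\bigl(\overline{d}+\langle C,a\rangle-\langle z,Aa+B\rangle\bigr),
\end{equation*}
the second being the point where the denominator $\overline{d}-\langle z,B\rangle$ of $\sigma(z)$ emerges. Raising to the $-\gamma$ power and multiplying, the factor $(\overline{d}-\langle z,B\rangle)^{\gamma}$ cancels and the left-hand side of the first identity becomes $\overline{d}^{\gamma}\bigl(\overline{d}+\langle C,a\rangle-\langle z,Aa+B\rangle\bigr)^{-\gamma}$.

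For the right-hand side I would compute $\overline{K^{\gamma}_{\sigma(0)}(a)}=\overline{d}^{\gamma}(\overline{d}+\langle C,a\rangle)^{-\gamma}$ and, using $\varphi(a)=(Aa+B)/(\langle a,C\rangle+d)$,
\begin{equation*}
K^{\gamma}_{\varphi(a)}(z) = \bigl(\overline{d}+\langle C,a\rangle\bigr)^{\gamma}\bigl(\overline{d}+\langle C,a\rangle-\langle z,Aa+B\rangle\bigr)^{-\gamma}.
\end{equation*}
The factor $(\overline{d}+\langle C,a\rangle)^{\gamma}$ cancels, matching the expression obtained for the left-hand side. This establishes the first identity; since $\gamma$ is a positive real number the $\gamma$-th powers in question are well defined because the principal branch is being used throughout (a remark I would include to be safe).

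For the second identity, I would simply invoke symmetry: an easy check shows that the linear-fractional adjoint operation is an involution, i.e.\ $\sigma_{\sigma}=\varphi$, and by \cite[Proposition 11]{CowenSzeged2000} $\sigma$ is itself a linear fractional self-map of $\mathbb{B}_n$ (which is in fact the only place where the hypothesis that $\varphi$ is a self-map is used). Applying the first identity with the roles of $\varphi$ and $\sigma$ interchanged yields the second identity at once. I do not anticipate a real obstacle here; the only mild bookkeeping hurdle is keeping the conjugate-linear/linear slots of $\langle\cdot,\cdot\rangle$ straight when passing between $\langle z,B\rangle$ and $\overline{\langle B,z\rangle}$ in the denominator of $\sigma$, which is precisely what makes the cancellation of $(\overline{d}-\langle z,B\rangle)^{\gamma}$ in the computation above work out.
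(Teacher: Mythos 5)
Your computation is correct and matches the paper's (unwritten) argument: the paper simply asserts that the lemma "can be verified by a direct computation, using the formulas of $\varphi$, $\sigma$ and of the reproducing kernel functions," which is exactly what you carried out, with the branch remark and the appeal to \cite[Proposition 11]{CowenSzeged2000} (so that $\sigma$ is again a linear fractional self-map and the roles of $\varphi$ and $\sigma$ may be interchanged, since $\sigma_{\sigma}=\varphi$) being legitimate fillers of the same outline. No gaps.
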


By Remark \ref{R:boundedness_WCOs}, both operators $W_{K^{\gamma}_{\varphi(0)},\sigma}$ and $W_{K^{\gamma}_{\sigma(0)},\varphi}$ are bounded on $H_{\gamma}$. Now the first identity in Lemma \ref{L:kernel_identity_adjoint} together with \eqref{Eqn:adjoint_WCO} shows that $$W_{K^{\gamma}_{\varphi(0)},\sigma}K^{\gamma}_a = W_{K^{\gamma}_{\sigma(0)},\varphi}^{*}K^{\gamma}_a\quad \text{ for all } a\in\mathbb{B}_n,$$ which implies that
\begin{equation}\label{Eqn:adjusted_Cowen_MacCluer}
W_{K^{\gamma}_{\varphi(0)},\sigma} = W_{K^{\gamma}_{\sigma(0)}, \varphi}^{*}.
\end{equation}
We point out that this formula is in fact equivalent to the formula of $C_{\varphi}^{*}$ given by Cowen and MacCluer in \cite[Theorem 16]{CowenSzeged2000}, which can be written as
\begin{equation*}
C_{\varphi}^{*} = M_{K^{\gamma}_{\varphi(0)}}C_{\sigma}M^{*}_{1/K^{\gamma}_{\sigma(0)}}.
\end{equation*}
Here for an analytic function $g$ on the unit ball, $M_{g}$ denotes the operator of multiplication by $g$ on $H_{\gamma}$.

For any point $p$ in $\mathbb{B}_n$, it follows from \cite[Definition~2.2.1]{RudinSpringer1980} that the involution $\varphi_p$ of $\mathbb{B}_n$ has the form $\varphi_p(z)=\frac{Tz+p}{1-\langle z,p\rangle}$ for some self-adjoint operator $T$ depending on $p$. This implies that the adjoint of $\varphi_p$ is the same as $\varphi_p$. Now let $f,\varphi$ satisfy \eqref{Eqn:normal_WCOs} in Theorem \ref{T:normal_WCOs}. Then the adjoint $\sigma$ of $\varphi$ has the form $\sigma(z)=\varphi_p(A^{*}\varphi_p(z))$ for $z\in\mathbb{B}_n$. (Note that the adjoint of $\psi_1\circ\psi_2$ is the composition of the adjoint of $\psi_2$ and the adjoint of $\psi_1$, in this order, see \cite[Lemma 12]{CowenSzeged2000}.) In particular, $\sigma(p)=\varphi_p(A^{*}\varphi_p(p))=p$. We thus obtain
\begin{equation*}
f = \alpha\frac{k^{\gamma}_p}{k^{\gamma}_p\circ\varphi} = \alpha\frac{K^{\gamma}_p}{K^{\gamma}_p\circ\varphi} = \alpha\frac{K^{\gamma}_{\sigma(p)}}{K^{\gamma}_{p}\circ\varphi} = \frac{\alpha}{\overline{K}^{\gamma}_{\varphi(0)}(p)}K^{\gamma}_{\sigma(0)}.
\end{equation*}
The last equality follows from the second identity in Lemma \ref{L:kernel_identity_adjoint}. Therefore we see that $f$ is a constant multiple of $K^{\gamma}_{\sigma(0)}$.

In the rest of this section, we assume that $\varphi$ is a linear fractional map and $f=K^{\gamma}_{\sigma(0)}$, where as above $\sigma$ is the adjoint map of $\varphi$. We look for conditions for which the weighted composition operator $W_{f,\varphi}$ is normal. We emphasize here that in the case $\varphi$ has a fixed point $p$ in $\mathbb{B}_n$, Theorem \ref{T:normal_WCOs} provides a complete answer: $W_{f,\varphi}$ is normal if and only if $\varphi(z)=\varphi_p(A\varphi_p(z))$ for some normal operator $A$ on $\mathbb{B}_n$. The result below does not require that $\varphi$ have a fixed point in $\mathbb{B}_n$.

\begin{proposition}\label{P:normal_fractional_WCOs}
Suppose $\varphi$ is a linear fractional self-map of $\mathbb{B}_n$ and $\sigma$ is its adjoint. Let $\gamma>0$ and put $f=K^{\gamma}_{\sigma(0)}$. Then the operator $W_{f,\varphi}$ is normal on $H_{\gamma}$ if and only if $|\varphi(0)|=|\sigma(0)|$ and $\varphi\circ\sigma = \sigma\circ\varphi$.
\end{proposition}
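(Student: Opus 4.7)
The plan is to compute both $W_{f,\varphi}W_{f,\varphi}^{*}$ and $W_{f,\varphi}^{*}W_{f,\varphi}$ explicitly as weighted composition operators and then invoke the uniqueness of the pair $(f,\varphi)$ established at the end of Section~2. The starting point is the identity \eqref{Eqn:adjusted_Cowen_MacCluer}, which gives $W_{f,\varphi}^{*} = W_{K^{\gamma}_{\varphi(0)},\sigma}$ whenever $f=K^{\gamma}_{\sigma(0)}$. Combining this with the multiplication rule \eqref{Eqn:multiplication_WCOs} reduces the normality question to the equality of two weighted composition operators whose weights are products of kernel functions composed with linear fractional maps.

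Concretely, I would apply \eqref{Eqn:multiplication_WCOs} and then Lemma \ref{L:kernel_identity_adjoint} (the second identity with $a=\varphi(0)$) to obtain
\begin{equation*}
W_{f,\varphi}W_{f,\varphi}^{*} \;=\; W_{K^{\gamma}_{\sigma(0)}\cdot K^{\gamma}_{\varphi(0)}\circ\varphi,\;\sigma\circ\varphi} \;=\; (1-|\varphi(0)|^{2})^{-\gamma}\,W_{K^{\gamma}_{\sigma(\varphi(0))},\;\sigma\circ\varphi},
\end{equation*}
and, symmetrically, using the first identity of Lemma \ref{L:kernel_identity_adjoint} with $a=\sigma(0)$,
\begin{equation*}
W_{f,\varphi}^{*}W_{f,\varphi} \;=\; W_{K^{\gamma}_{\varphi(0)}\cdot K^{\gamma}_{\sigma(0)}\circ\sigma,\;\varphi\circ\sigma} \;=\; (1-|\sigma(0)|^{2})^{-\gamma}\,W_{K^{\gamma}_{\varphi(\sigma(0))},\;\varphi\circ\sigma}.
\end{equation*}
These are the two sides of the normality equation.

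Now for the forward direction, equating the two expressions and noting that the kernel weights are nowhere zero on $\mathbb{B}_{n}$, the uniqueness of $(f,\varphi)$ for a non-zero weighted composition operator forces both the composition maps and the (scaled) weights to coincide. From $\sigma\circ\varphi=\varphi\circ\sigma$ I get $\sigma(\varphi(0))=\varphi(\sigma(0))$, so the two kernel weights $K^{\gamma}_{\sigma(\varphi(0))}$ and $K^{\gamma}_{\varphi(\sigma(0))}$ are the same function. The remaining scalar equality $(1-|\varphi(0)|^{2})^{-\gamma} = (1-|\sigma(0)|^{2})^{-\gamma}$ then gives $|\varphi(0)|=|\sigma(0)|$. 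The converse is immediate: assuming these two conditions, the computations above produce identical weighted composition operators, so $W_{f,\varphi}$ is normal.

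The only subtlety — and what I would treat most carefully — is verifying that the uniqueness result from Section~2 actually applies at this step. One needs that the weight $K^{\gamma}_{\sigma(\varphi(0))}$ (and its counterpart) is not identically zero, which is obvious since $(1-\langle w,a\rangle)^{-\gamma}$ never vanishes for $w,a\in\mathbb{B}_{n}$, and that $W_{f,\varphi}$ itself is non-zero so that both sides of the normality equation are non-zero weighted composition operators. No further obstacle is anticipated; the argument essentially packages the calculations made in Section~2 with Lemma \ref{L:kernel_identity_adjoint}.
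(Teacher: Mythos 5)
Your proposal is correct and follows essentially the same route as the paper: compute $W_{f,\varphi}^{*}W_{f,\varphi}$ and $W_{f,\varphi}W_{f,\varphi}^{*}$ via \eqref{Eqn:adjusted_Cowen_MacCluer} and \eqref{Eqn:multiplication_WCOs}, rewrite the weights with Lemma \ref{L:kernel_identity_adjoint}, and compare symbols using the uniqueness of the pair $(f,\varphi)$ from Section~2. The only cosmetic difference is that you apply Lemma \ref{L:kernel_identity_adjoint} before equating the two products (and you make the appeal to uniqueness and non-vanishing of the kernel weights explicit, which the paper leaves implicit), so no changes are needed.
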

\begin{proof}
Using \eqref{Eqn:adjusted_Cowen_MacCluer} and \eqref{Eqn:multiplication_WCOs}, we compute
\begin{align*}
W_{f,\varphi}^{*}W_{f,\varphi} & = W_{K^{\gamma}_{\varphi(0)},\sigma}W_{K^{\gamma}_{\sigma(0)},\varphi} = W_{K^{\gamma}_{\varphi(0)}\cdot K^{\gamma}_{\sigma(0)}\circ\sigma,\,\varphi\circ\sigma},\\
W_{f,\varphi}W_{f,\varphi}^{*} & = W_{K^{\gamma}_{\sigma(0)},\varphi}W_{K^{\gamma}_{\varphi(0)},\sigma} = W_{K^{\gamma}_{\sigma(0)}\cdot K^{\gamma}_{\varphi(0)}\circ\varphi,\,\sigma\circ\varphi}.
\end{align*}
This shows that $W_{f,\varphi}$ is normal if and only if $\varphi\circ\sigma=\sigma\circ\varphi$ and
\begin{equation}\label{Eqn:necessary_normal_WCOs}
 K^{\gamma}_{\varphi(0)}\cdot K^{\gamma}_{\sigma(0)}\circ\sigma=K^{\gamma}_{\sigma(0)}\cdot K^{\gamma}_{\varphi(0)}\circ\varphi.
\end{equation}

By the first identity in Lemma \ref{L:kernel_identity_adjoint}, the left hand side of \eqref{Eqn:necessary_normal_WCOs} equals $$\overline{K}^{\gamma}_{\sigma(0)}(\sigma(0))K^{\gamma}_{\varphi(\sigma(0))} = (1-|\sigma(0)|^2)^{-\gamma}K^{\gamma}_{\varphi(\sigma(0))}.$$ Similarly, by the second identity in Lemma \ref{L:kernel_identity_adjoint}, the right hand side of \eqref{Eqn:necessary_normal_WCOs} equals $$\overline{K}^{\gamma}_{\varphi(0)}(\varphi(0))K^{\gamma}_{\sigma(\varphi(0))}=(1-|\varphi(0)|^2)^{-\gamma}K^{\gamma}_{\sigma(\varphi(0))}.$$ Thus \eqref{Eqn:necessary_normal_WCOs} holds if and only if $|\sigma(0)|=|\varphi(0)|$ and $\varphi(\sigma(0))=\sigma(\varphi(0))$. The latter is certainly true if $\varphi\circ\sigma=\sigma\circ\varphi$. 

Therefore, the operator $W_{K^{\gamma}_{\sigma(0)},\varphi}$ is normal if and only if $\varphi\circ\sigma = \sigma\circ\varphi$ and $|\varphi(0)|=|\sigma(0)|$, which completes the proof of the proposition.
\end{proof}

\begin{remark} Proposition \ref{P:normal_fractional_WCOs} in the case of the Hardy space on the unit disk ($n=1$) was obtained by Bourdon and Narayan in \cite[Proposition 12]{BourdonJMAA2010} but their conclusion was stated in a slightly different way.
\end{remark}

\begin{remark}\label{R:sufficient_normal_WCOs} 
In the case $n=1$ and $\varphi(z)=\frac{az+b}{cz+d}$ for complex numbers $a,b,c,d$, an easy calculation shows that the conditions obtained in Proposition \ref{P:normal_fractional_WCOs} are equivalent to $|b|=|c|$ and $\overline{a}b-\overline{c}d = b\overline{d}-a\overline{c}$.

When $n\geq 2$ and $\varphi(z)=\frac{Az+B}{\langle z,C\rangle +d}$, the conditions can then be expressed in terms of $A, B, C$ and $d$. We leave this to the interested reader. 
\end{remark}

We conclude the section by a result taken from \cite[Proposition 13]{BourdonJMAA2010} with a slightly modified proof using Remark \ref{R:sufficient_normal_WCOs}.

\begin{proposition} Suppose that $\varphi$  is a linear fractional self-map of the unit disk of parabolic type (so there is an $\omega$  with $|\omega|=1$ such that $\varphi(\omega)=\omega$ and $\varphi'(\omega)=1$). Then the operator $W_{K^{\gamma}_{\sigma(0)},\varphi}$ is normal on $H_{\gamma}$ for any $\gamma>0$. Here as before, $\sigma$ is the adjoint map of $\varphi$.
\end{proposition}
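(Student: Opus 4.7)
The plan is to invoke Remark \ref{R:sufficient_normal_WCOs}, which in one dimension reduces the normality of $W_{K^{\gamma}_{\sigma(0)},\varphi}$ to the two algebraic identities $|b|=|c|$ and $\overline{a}b - \overline{c}d = b\overline{d}-a\overline{c}$ on the coefficients of $\varphi(z) = (az+b)/(cz+d)$. So everything comes down to writing a parabolic self-map with boundary fixed point $\omega$ in a convenient normal form and then verifying the two identities by direct algebra.

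First I would fix the normalization $ad-bc=1$. The derivative condition $\varphi'(\omega)=1$ becomes $(c\omega+d)^2=1$, so $c\omega+d=\pm 1$, while the parabolic condition (a unique fixed point, counted with multiplicity two) forces the trace to satisfy $a+d=\pm 2$. Combining these with the fixed-point equation $\varphi(\omega)=\omega$ and choosing the $+$ signs, a short elimination yields the one-parameter family
\begin{equation*}
\varphi(z) = \frac{(1+t)z - t\omega}{t\overline{\omega}z + (1-t)},\qquad t\in\mathbb{C},
\end{equation*}
with $t$ further restricted by the requirement that $\varphi$ map $\mathbb{D}$ into itself. (This family is well known: conjugating by the Cayley transform $\omega\mapsto\infty$ turns it into the group of translations of the upper half-plane, with $t$ essentially the translation parameter.)

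Now I read off $a=1+t$, $b=-t\omega$, $c=t\overline{\omega}$, $d=1-t$. Then $|b|=|c|=|t|$, and a routine expansion gives
\begin{equation*}
\overline{a}b - \overline{c}d = -(t+\overline{t})\omega = b\overline{d} - a\overline{c},
\end{equation*}
so both identities of Remark \ref{R:sufficient_normal_WCOs} hold and $W_{K^{\gamma}_{\sigma(0)},\varphi}$ is normal on $H_{\gamma}$.

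The main obstacle is the parametrization step: one must set up the normalization carefully, handle the sign choices, and check that the family above really exhausts all parabolic self-maps fixing $\omega$. Once that is done the remaining arithmetic is immediate. A more conceptual alternative would be to use Proposition \ref{P:normal_fractional_WCOs} directly: the same substitution shows $\sigma$ has the above form with $t$ replaced by $\overline{t}$, so $|\sigma(0)|=|t|/|1-t|=|\varphi(0)|$, while the commutativity $\varphi\circ\sigma=\sigma\circ\varphi$ is automatic because both maps belong to the one-parameter translation group after the Cayley conjugation.
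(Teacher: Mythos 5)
Your proposal is correct and follows essentially the same route as the paper: both arguments reduce the claim to the coefficient criterion of Remark \ref{R:sufficient_normal_WCOs} (hence Proposition \ref{P:normal_fractional_WCOs}) and verify it on an explicit one-parameter normal form for parabolic self-maps, your family $\varphi(z)=\frac{(1+t)z-t\omega}{t\overline{\omega}z+(1-t)}$ being, for $\omega=1$, just a reparametrization ($t\mapsto -t/2$) of the paper's family $\varphi(z)=\frac{(2-t)z+t}{-tz+(2+t)}$. The only difference is cosmetic: you derive the normal form directly for a general boundary fixed point $\omega$, whereas the paper cites Bourdon--Narayan to reduce to $\omega=1$; your algebra ($|b|=|c|=|t|$ and $\overline{a}b-\overline{c}d=b\overline{d}-a\overline{c}=-(t+\overline{t})\omega$) checks out.
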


\begin{proof} As it is explained in the proof of \cite[Proposition 13]{BourdonJMAA2010}, we only need to consider $\omega=1$ and $\varphi$ of the form $$\varphi(z)=\dfrac{(2-t)z+t}{-tz+(2+t)}, \quad\text{ where } {\rm Re}(t)\geq 0.$$
Since $a=2-t, b=t, c=-t$ and $d=2+t$, we have $|b|=|c|$ and $\overline{a}b-\overline{c}d = b\overline{d}-a\overline{c}=4{\rm Re}(t)$. The conclusion now follows from Remark \ref{R:sufficient_normal_WCOs} and Proposition \ref{P:normal_fractional_WCOs}.
\end{proof}

\section{Self-adjoint weighted composition operators}

In this section we characterize when the adjoint of a weighted composition operator on $H_{\gamma}$ is another weighted composition operator. As a consequence, we determine necessary and sufficient conditions for which the operator $W_{f,\varphi}$ is a self-adjoint operator. This generalizes the characterizations obtained in \cite{CowenTAMS2010,CowenPreprint2010}, where the one-dimensional case is considered. Furthermore, our solutions to the equation $W^{*}_{g,\psi}=W_{f,\varphi}$ seems to be new even in one dimension.

We will need the following elementary result regarding maps on the unit ball of a Hilbert space. The existence of the linear extensions follows from a similar argument as in the proof of Lemma \ref{L:unitary_on_Cn}. The boundedness is well known and it is a consequence of the closed graph theorem.

\begin{lemma}\label{L:adjoint_operators} Let $\mathcal{M}$ be Hilbert space with an inner product $\langle,\rangle$. Suppose $F$ and $G$ are two maps from the unit ball $\mathcal{B}$ of $\mathcal{M}$ into $\mathcal{M}$ such that for all $z,w\in\mathcal{B}$, $\langle F(w),z\rangle = \langle w,G(z)\rangle$. Then there is a bounded linear operator $A$ on $\mathcal{M}$ such that $F(w)=Aw$ and $G(w)=A^{*}w$ for all $w\in\mathcal{B}$.
\end{lemma}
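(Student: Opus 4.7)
The plan is to imitate the extension argument used in the proof of Lemma \ref{L:unitary_on_Cn}, with a simplification: the identity $\langle F(w),z\rangle=\langle w,G(z)\rangle$ pairs $F(w)$ against an arbitrary $z\in\mathcal{B}$ (not against $G(z)$), so there is no need to first pass to an auxiliary projection onto the closed span of the range of $G$. The outputs of $F$ are already ``separated'' by elements of $\mathcal{B}$.

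First I would verify a weak linearity statement for $F$. For $w_1,w_2\in\mathcal{B}$ and scalars $c_1,c_2$ with $c_1w_1+c_2w_2\in\mathcal{B}$, and for any $z\in\mathcal{B}$,
\begin{equation*}
\bigl\langle F(c_1w_1+c_2w_2)-c_1F(w_1)-c_2F(w_2),\,z\bigr\rangle=\langle c_1w_1+c_2w_2,G(z)\rangle-c_1\langle w_1,G(z)\rangle-c_2\langle w_2,G(z)\rangle=0.
\end{equation*}
Since every vector in $\mathcal{M}$ is a scalar multiple of an element of $\mathcal{B}$, the displayed vector is orthogonal to all of $\mathcal{M}$ and hence is zero. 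Thus $F$ is additive and homogeneous on admissible combinations in $\mathcal{B}$, and the standard rescaling device (setting $Aw:=tF(w/t)$ for any $t>\|w\|$ and checking independence of $t$) extends $F$ uniquely to a linear map $A$ on $\mathcal{M}$. Applying exactly the same reasoning to $G$ produces a linear extension $B$.

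With the extensions in hand, the original identity lifts by bilinearity to $\langle Aw,z\rangle=\langle w,Bz\rangle$ for every $w,z\in\mathcal{M}$; this immediately says $B\subseteq A^{*}$ once $A$ is bounded. To get boundedness via the closed graph theorem, I would verify that $A$ is closed: if $w_m\to 0$ and $Aw_m\to y$, then for every $z\in\mathcal{B}$ one has $\langle y,z\rangle=\lim_m\langle Aw_m,z\rangle=\lim_m\langle w_m,Bz\rangle=0$, forcing $y=0$. The symmetric argument gives closedness (hence boundedness) of $B$, and the lifted identity then identifies $B$ as the adjoint $A^{*}$. There is no real obstacle here; the only mild bookkeeping concerns keeping the test vectors inside $\mathcal{B}$ in the linearity step, which is handled by the rescaling used to define $A$.
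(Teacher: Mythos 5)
Your proof is correct and takes essentially the same route as the paper, whose proof of this lemma is only a sketch invoking the extension argument of Lemma \ref{L:unitary_on_Cn} together with the closed graph theorem --- exactly what you carry out in detail. Your observation that the projection onto the closed span of the range of $G$ can be skipped is accurate here, since $F(w)$ is paired against arbitrary $z\in\mathcal{B}$, whose scalar multiples exhaust $\mathcal{M}$.
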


By \eqref{Eqn:adjusted_Cowen_MacCluer} we see that the adjoint of $W_{K^{\gamma}_{\sigma(0)},\varphi}$ is the weighted composition operator $W_{K^{\gamma}_{\varphi(0)},\sigma}$ when $\varphi$ is a linear fractional map and $\sigma$ is the adjoint map of $\varphi$. Our main result in this section shows that any non-zero weighted composition operator whose adjoint is a weighted composition operator must be a constant multiple of an operator of this form.

\begin{theorem}\label{T:adjoint_WCOs}
Let $f,g$ be analytic functions on $\mathbb{B}_n$ and $\varphi,\psi$ be analytic self-maps of $\mathbb{B}_n$. Then $W_{f,\varphi}$ and $W_{g,\psi}$ are non-zero bounded operators on $H_{\gamma}$ and $W^{*}_{g,\psi} = W_{f,\varphi}$ if and only if there are vectors $c,d$ in $\mathbb{B}_n$, a linear operator $A$ on $\mathbb{C}^n$ and a non-zero complex number $\alpha$ such that
\begin{equation}\label{Eqn:equality_adjoint_functions}
\varphi(z)=\dfrac{d+Az}{1-\langle z,c\rangle} \text{ and } \psi(z)=\dfrac{c+A^{*}z}{1-\langle z,d\rangle} \text{ for all } z\in\mathbb{B}_n,
\end{equation}
and $f=\alpha\, K^{\gamma}_{c} = \alpha\, K^{\gamma}_{\psi(0)},\; g=\overline{\alpha}\, K^{\gamma}_{d} = \overline{\alpha}\, K^{\gamma}_{\varphi(0)}$. In particular, the maps $\varphi$ and $\psi$ are linear fractional maps.
\end{theorem}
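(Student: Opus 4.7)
The plan is to apply both operators to the reproducing kernel $K^{\gamma}_{z}$ and squeeze all the data out of the resulting functional equation. For sufficiency, once we set $c=\psi(0)$ and $d=\varphi(0)$, the formulas in \eqref{Eqn:equality_adjoint_functions} make $\psi$ precisely the Cowen--MacCluer adjoint $\sigma_\varphi$ of the linear fractional map $\varphi$; then \eqref{Eqn:adjusted_Cowen_MacCluer} gives $W^{*}_{K^{\gamma}_{\sigma_\varphi(0)},\varphi}=W_{K^{\gamma}_{\varphi(0)},\sigma_\varphi}$, and scaling by $\alpha$ and $\overline{\alpha}$ immediately yields $W^{*}_{g,\psi}=W_{f,\varphi}$. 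Boundedness is supplied by Remark \ref{R:boundedness_WCOs}.

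For necessity, I would combine \eqref{Eqn:adjoint_WCO} with the direct action of $W_{f,\varphi}$ on $K^{\gamma}_{z}$ to convert the operator identity into the scalar equation
$$\frac{\overline{g(z)}}{(1-\langle w,\psi(z)\rangle)^{\gamma}}=\frac{f(w)}{(1-\langle\varphi(w),z\rangle)^{\gamma}}\quad\text{for all } z,w\in\mathbb{B}_n.$$
Setting $z=0$ gives $f(w)=\overline{g(0)}\,K^{\gamma}_{\psi(0)}(w)$, while $w=0$ gives $g(z)=\overline{f(0)}\,K^{\gamma}_{\varphi(0)}(z)$. Writing $\alpha=f(0)$, $c=\psi(0)\in\mathbb{B}_n$ and $d=\varphi(0)\in\mathbb{B}_n$, this already produces $f=\alpha K^{\gamma}_{c}$ and $g=\overline{\alpha}K^{\gamma}_{d}$; the assumption that $W_{f,\varphi}$ is non-zero forces $\alpha\neq 0$.

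Substituting these back into the scalar equation and canceling $\alpha$ leaves a $\gamma$th-power identity between two nowhere-vanishing continuous functions that both equal $1$ at the origin; since the $\gamma$th roots of unity form a discrete subset of $\mathbb{C}^{\times}$, the two bases must agree on a neighborhood of the origin, and then by analyticity everywhere:
$$(1-\langle d,z\rangle)(1-\langle w,\psi(z)\rangle)=(1-\langle w,c\rangle)(1-\langle\varphi(w),z\rangle).$$
Expanding and regrouping turns this into the sesquilinear identity
$$\bigl\langle(1-\langle w,c\rangle)\varphi(w)-d,\;z\bigr\rangle=\bigl\langle w,\;(1-\langle z,d\rangle)\psi(z)-c\bigr\rangle,$$
so the maps $\Phi(w):=(1-\langle w,c\rangle)\varphi(w)-d$ and $\Psi(z):=(1-\langle z,d\rangle)\psi(z)-c$ satisfy the hypothesis of Lemma \ref{L:adjoint_operators}. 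That lemma produces a bounded linear operator $A$ on $\mathbb{C}^n$ with $\Phi(w)=Aw$ and $\Psi(z)=A^{*}z$, and solving for $\varphi$ and $\psi$ recovers \eqref{Eqn:equality_adjoint_functions}. The only delicate point is justifying the passage from the $\gamma$th-power identity to its base; everything else is routine algebra once the kernel equation is in hand.
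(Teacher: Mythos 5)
Your proposal is correct and follows essentially the same route as the paper: the kernel identity from \eqref{Eqn:adjoint_WCO}, the specializations $z=0$ and $w=0$ to pin down $f$ and $g$, the passage from the $\gamma$th-power identity to \eqref{Eqn:equality_adjoint_maps}, the sesquilinear rearrangement feeding into Lemma \ref{L:adjoint_operators}, and sufficiency via \eqref{Eqn:adjusted_Cowen_MacCluer} and Remark \ref{R:boundedness_WCOs}. Your explicit justification of the root-extraction step (continuity plus discreteness of the solutions of $t^{\gamma}=1$) is a point the paper leaves implicit, and it is handled correctly.
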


\begin{remark}
Note that the map $\psi$ in \eqref{Eqn:equality_adjoint_functions} is the adjoint of $\varphi$. Thus Theorem \ref{T:adjoint_WCOs} says, in particular, that if $W_{g,\psi}$ is the adjoint operator of $W_{f,\varphi}$, then $\psi$ is the adjoint of $\varphi$.
\end{remark}

\begin{proof}
Suppose first $W_{g,\psi}^{*}=W_{f,\varphi}$ on $H_{\gamma}$ and they are non-zero operators. For any $z$ and $w$ in $\mathbb{B}_n$, using \eqref{Eqn:adjoint_WCO} we have
\begin{align}\label{Eqn:equality_adjoint}
f(w)K^{\gamma}_{z}(\varphi(w)) = (W_{f,\varphi}K^{\gamma}_z)(w) = (W^{*}_{g,\psi}K^{\gamma}_z)(w) = \overline{g}(z)K^{\gamma}_{\psi(z)}(w).
\end{align}

Letting $z=0$ in \eqref{Eqn:equality_adjoint} gives $f(w)=\overline{g}(0)K^{\gamma}_{\psi(0)}(w)=\overline{g}(0)\big(1-\langle w,\psi(0)\rangle\big)^{-\gamma}$ for $w\in\mathbb{B}_n$. This, in particular, implies $f(0)=\overline{g}(0)$, which is non-zero by the assumption that operators are non-zero.

Letting $w=0$ in \eqref{Eqn:equality_adjoint} gives 
\begin{equation*}
\overline{g}(z)=f(0)K^{\gamma}_{z}(\varphi(0))=f(0)\big(1-\langle \varphi(0),z\rangle\big)^{-\gamma} \text{ for } z\in\mathbb{B}_n.
\end{equation*}

Substituting the formulas for $f, g$ and $K(\cdot,\cdot)$ into \eqref{Eqn:equality_adjoint} and canceling the constants, we obtain
\begin{align*}
(1-\langle w,\psi(0)\rangle)^{-\gamma}(1-\langle \varphi(w),z\rangle)^{-\gamma} = (1-\langle\varphi(0),z\rangle)^{-\gamma}(1-\langle w,\psi(z)\rangle)^{-\gamma}.
\end{align*}
This identity implies 
\begin{equation}\label{Eqn:equality_adjoint_maps}
(1-\langle w,\psi(0)\rangle)(1-\langle \varphi(w),z\rangle) = (1-\langle\varphi(0),z\rangle)(1-\langle w,\psi(z)\rangle).
\end{equation}
An easy calculation then gives
\begin{equation*}
\Big\langle \big(1-\langle w,\psi(0)\rangle\big)\varphi(w)-\varphi(0),z\Big\rangle = \Big\langle w, \big(1-\langle z,\varphi(0)\rangle\big)\psi(z)-\psi(0)\Big\rangle.
\end{equation*}
Using Lemma \ref{L:adjoint_operators}, we conclude that there exists a linear operator $A$ on $\mathbb{C}^n$ such that
\begin{equation*}
\varphi(w)=\dfrac{\varphi(0)+Aw}{1-\langle w,\psi(0)\rangle} \text{ and } \psi(z)=\dfrac{\psi(0)+A^{*}z}{1-\langle z,\varphi(0)\rangle} \text{ for all } w,z\in\mathbb{B}_n.
\end{equation*}
Put $\alpha=f(0), c=\psi(0)$ and $d=\varphi(0)$, we see that $f,g$ and $\varphi,\psi$ satisfy \eqref{Eqn:equality_adjoint_functions}.

For the converse, suppose $f,g$ and $\varphi,\psi$ are as above such that $\varphi$ and $\psi$ map the unit ball into itself. Since $W_{f,\varphi} = \alpha W_{K^{\gamma}_{\psi(0)},\varphi}$ and $W_{g,\psi} = \overline{\alpha} W_{K^{\gamma}_{\varphi(0)},\psi}$, \eqref{Eqn:adjusted_Cowen_MacCluer} gives $W_{g,\psi}=W^{*}_{f,\varphi}$ on $H_{\gamma}$, which is equivalent to $W_{f,\varphi} = W_{g,\psi}^{*}$. The boundedness of these operators on $H_{\gamma}$ follows from Remark \ref{R:boundedness_WCOs}.
\end{proof}

As an immediate application of Theorem \ref{T:adjoint_WCOs}, we obtain a characterization of self-adjoint weighted composition operators.

\begin{corollary}\label{C:self-adjoint_WCOs}
Let $f$ be an analytic function and $\varphi$ an analytic self-map of $\mathbb{B}_n$. Then $W_{f,\varphi}$ is a non-zero self-adjoint bounded operator on $H_{\gamma}$ if and only if there is a vector $c\in\mathbb{B}_n$, a self-adjoint linear operator $A$ on $\mathbb{C}^n$ and a real number $\alpha$ such that $f = \alpha\,K^{\gamma}_{c}=\alpha\, K^{\gamma}_{\varphi(0)}$ and $\varphi(z)=\frac{c+Az}{1-\langle z,c\rangle}$ for $z\in\mathbb{B}_n$.
\end{corollary}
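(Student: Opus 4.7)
The plan is to derive Corollary \ref{C:self-adjoint_WCOs} as a direct specialization of Theorem \ref{T:adjoint_WCOs} to the diagonal case $g=f$ and $\psi=\varphi$. Since $W_{f,\varphi}$ self-adjoint means $W_{f,\varphi}^{*}=W_{f,\varphi}$, the hypothesis of Theorem \ref{T:adjoint_WCOs} is satisfied (with $g=f$, $\psi=\varphi$), and its conclusion provides vectors $c,d\in\mathbb{B}_n$, a linear operator $A$ on $\mathbb{C}^n$, and a nonzero $\alpha\in\mathbb{C}$ such that
\[
\varphi(z)=\frac{d+Az}{1-\langle z,c\rangle},\quad \psi(z)=\frac{c+A^{*}z}{1-\langle z,d\rangle},\quad f=\alpha K^{\gamma}_{c},\quad g=\overline{\alpha}K^{\gamma}_{d}.
\]
The remaining work is to extract from the identifications $f=g$ and $\varphi=\psi$ the three statements $c=d$, $A=A^{*}$, and $\alpha\in\mathbb{R}$.

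To get these, I would first use $f=g$: the equality $\alpha K^{\gamma}_{c}=\overline{\alpha}K^{\gamma}_{d}$ evaluated at $z=0$ gives $\alpha=\overline{\alpha}$ (since $K^{\gamma}_{c}(0)=K^{\gamma}_{d}(0)=1$), so $\alpha$ is real, and then $K^{\gamma}_{c}=K^{\gamma}_{d}$. Because distinct points in $\mathbb{B}_n$ yield distinct reproducing kernels (one can see this by taking a $w$ for which $\langle w,c\rangle\ne\langle w,d\rangle$), this forces $c=d$. With $c=d$ in hand, the identity $\varphi=\psi$ reduces to $d+Az=c+A^{*}z$ for all $z\in\mathbb{B}_n$, and comparing constant and linear terms yields $A=A^{*}$. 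Thus $c=\varphi(0)$, $A$ is self-adjoint, and $f=\alpha K^{\gamma}_{\varphi(0)}$ with $\alpha$ real, which is exactly the asserted form.

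The converse direction is essentially a restatement of Theorem \ref{T:adjoint_WCOs}: given the claimed form of $f$ and $\varphi$ with $A=A^{*}$ and $\alpha\in\mathbb{R}$, applying that theorem in the other direction with $g=f$ and $\psi=\varphi$ shows $W_{f,\varphi}^{*}=W_{f,\varphi}$, while boundedness is guaranteed by Remark \ref{R:boundedness_WCOs}. Since Theorem \ref{T:adjoint_WCOs} has already done all of the substantive analytic work (solving the functional identity on kernel functions and invoking Lemma \ref{L:adjoint_operators} to produce the operator $A$), no real obstacle remains here; the only subtlety is the short argument that $f=g$ forces both $c=d$ and $\alpha\in\mathbb{R}$, and that $\varphi=\psi$ together with $c=d$ forces $A$ to be self-adjoint.
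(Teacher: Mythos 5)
Your proposal is correct and is essentially the paper's own argument: the paper likewise proves the corollary by applying Theorem \ref{T:adjoint_WCOs} with $g=f$ and $\psi=\varphi$, reading off $\alpha=\overline{\alpha}$, $c=d$, and $A=A^{*}$ from the resulting identities, with the converse coming from the theorem's ``if'' direction. Your write-up merely fills in the small verifications (evaluating the kernels at $0$, distinctness of kernels at distinct points, matching numerators once $c=d$), which the paper leaves implicit; note also that $c=d$ follows even more immediately from $c=\psi(0)=\varphi(0)=d$.
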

\begin{proof}
Since $W_{f,\varphi}^{*}=W_{f,\varphi}$, Theorem \ref{T:adjoint_WCOs} shows that there are vectors $c,d$ in $\mathbb{B}_n$, a linear operator $A$ on $\mathbb{C}^n$ and a complex number $\alpha$ such that for all $z\in\mathbb{B}_n$, $f(z)=\alpha\,K^{\gamma}_{d} = \overline{\alpha}\,K^{\gamma}_{c}$ and $\varphi(z)=\frac{d+Az}{1-\langle z,c\rangle} = \frac{c+A^{*}z}{1-\langle z,d\rangle}$. This shows that $\overline{\alpha}=\alpha$, $c=d$ and $A^{*}=A$ and hence $f,\varphi$ have the required form.
\end{proof}

In \cite{CowenTAMS2010, CowenPreprint2010}, the authors go on to describe the eigenvectors, eigenvalues and other spectral properties of self-adjoint weighted composition operators on $H_{\gamma}$ ($\gamma\geq 1$) of the unit disk. Their analysis is based on the classification of linear fractional self-maps of the unit disk.

In dimension $n\geq 2$ and in the case $\varphi$ has a fixed point in $\mathbb{B}_n$ (the elliptic case), eigenvectors, eigenvalues and the spectrum of the self-adjoint operator $W_{f,\varphi}$ can be described as in Proposition \ref{P:spectrum_normal_WCOs} and in the discussion preceding this proposition. The cases where all the fixed points of $\varphi$ lie on the unit sphere (the parabolic and hyperbolic cases) are, we believe, more complicated and seem to require more careful analysis. We leave this open for future research.

\end{document}